\newtheorem{theorem}{Theorem}[section]
\newtheorem{lemma}{Lemma}[section]
\theoremstyle{definition}
\newtheorem{remark}{Remark}[section]
\newtheorem{definition}{Definition}[section]
\theoremstyle{definition}
\newtheorem{example}{Example}[section]
\title{Combinatorial Geometry of Threshold-Linear Networks}
\date{August 3, 2020}
\begin{document}
\author{Carina Curto}
\author{Christopher Langdon} 
\author{Katherine Morrison}
\maketitle

\noindent {\bf Abstract.} The architecture of a neural network constrains the potential dynamics that can emerge.  Some architectures may only allow for a single dynamic regime, while others display a great deal of flexibility with qualitatively different dynamics that can be reached by modulating connection strengths.  In this work, we develop novel mathematical techniques to study the dynamic constraints imposed by different network architectures in the context of competitive threshold-linear networks (TLNs).  Any given TLN is naturally characterized by a hyperplane arrangement in $\mathbb{R}^n$, and the combinatorial properties of this arrangement determine the pattern of fixed points of the dynamics. This observation enables us to recast the question of network flexibility in the language of oriented matroids, allowing us to employ tools and results from this theory in order to characterize the different dynamic regimes a given architecture can support.  In particular, fixed points of a TLN correspond to cocircuits of an associated oriented matroid; and mutations of the matroid correspond to bifurcations in the collection of fixed points.  As an application, we provide a complete characterization of all possible sets of fixed points that can arise in networks through size $n=3$, together with descriptions of how to modulate synaptic strengths of the network in order to access the different dynamic regimes. These results provide a framework for studying the possible computational roles of various motifs observed in real neural networks.
\vspace{.05in}

\section{Introduction}
 An $n$-dimensional threshold-linear network (TLN) is a continuous dynamical system given by the nonlinear equations:
\begin{equation}
\dot{x}_i=-x_i+\bigg[\sum_{j= i}^n W_{ij}x_j+b_i\bigg]_+ \hspace{.5cm}i=1,\dots,n
\end{equation}
where $\dot{x}_i = dx_i/dt$ denotes the time derivative, $b_i$ and $W_{ij}$ are all real numbers, $W_{ii} = 0$ for each $i$, and $[x]_+=\text{max}\{x,0\}$. TLNs serve as simple models of recurrent neural circuits. The $i$-th component is interpreted as a simplified description of the dynamics of a single neuron $i$ where $x_i(t)$ represents the firing rate as a function of time, $b_i$ the external input to the $i$-th neuron and $W_{ij}$ the strength of the synaptic connection between the presynaptic $j$ and the postsynaptic $i$. In this work we will consider competitive networks with positive external input so that $W_{ij}<0$ and $b_i>0$ for all $i$ and $j$ when $i \neq j$ (and $W_{ii} = 0$). These networks were previously studied by the authors in \cite{fp-paper, robust-motifs, CTLN-preprint}.

\par For a fixed point $x\in\mathbb{R}^n$, the subset $\sigma=\{i\in \{1,\ldots,n\} :x_i>0\}$ is called the \textit{support} of the fixed point and corresponds to the set of coactive neurons at the fixed point. Using the notation $[n] = \{1,\ldots,n\}$ we see that $\sigma \subset [n]$. The supports corresponding to stable fixed points are called permitted sets and have been previously studied in \cite{HahnSeungSlotine}. Here we consider both stable and unstable fixed points, as unstable fixed points appear to play a significant role in the transient dynamics of these networks as well as in shaping the dynamic attractors (such as limit cycles) \cite{fp-paper, robust-motifs}. For a network defined by $W$ and $b$, the set of all fixed point supports is denoted $\operatorname{FP}(W,b)$. \\

\noindent{\it A piecewise-linear dynamical system}
\par An alternative perspective on fixed point supports is obtained by considering a threshold-linear network as a continuous piecewise-linear dynamical system. Indeed, if for each $i=1,...,n$ we define on the state space the linear functional $l_i^*:\mathbb{R}^n\to\mathbb{R}$:
\begin{equation}
l^*_i(x):=\sum_{j\not=i}W_{ij}x_j+b_i \hspace{.5cm}i=1,\dots,n
\end{equation}
then the zero sets $L_i:=\{l^*_i(x)=0\}$ are hyperplanes that partition the state space into cells where the equations of the dynamical system are linear.
Specifically, each $L_i$ partitions $\mathbb{R}^n$ into a positive and negative side according to the sign of $l^*_i$ and for each subset $\sigma\subset[n]$ we have the cell $L^\sigma$ defined by $l^*_i>0$ for $i\in\sigma$ and $l^*_i<0$ for $i\not\in\sigma$. The restriction of $(1)$ to this cell is linear and given by the equations
\vspace{.05in}
\begin{equation}\label{eq:1}
\dot{x}_i=\begin{cases}
-x_i +\displaystyle{\sum_{j\not=i}W_{ij}x_j+b_i} \hspace{.8cm} i\in\sigma\\
-x_i\hspace{3.4cm} i\not\in\sigma
\end{cases}
\end{equation}
\vspace{.05in}

 Generically, these linear systems have a single fixed point which we denote $x^\sigma\in\mathbb{R}^n$.  If $x^\sigma\in L^\sigma$ then $x^\sigma$ is said to be \textit{admissible} and is a fixed point of the nonlinear system $(1)$. If $x^\sigma\in L^\tau$ where $\tau\not=\sigma$, then it is said to be \textit{virtual} and it is not a fixed point of the nonlinear system. If $x^\sigma$ is admissible then the support of $x^\sigma$ is $\sigma $ and thus the set of supports $\operatorname{FP}(W,b)$ can equivalently be thought of as the admissible subsets.\\

\noindent{\it Support bifurcations}
  \par A key point is that the set $\operatorname{FP}(W,b)$ can change upon variation of the network parameters $W$ and $b$. We will call such a scenario a \textit{support bifurcation}. Alternatively, support bifurcations can be viewed as boundary bifurcations, bifurcations that arise in piecewise-linear dynamical systems and correspond to admissible equilibria crossing the boundaries between linear systems. 

 \begin{figure}[h]
    \centering
    \includegraphics[width=5in]{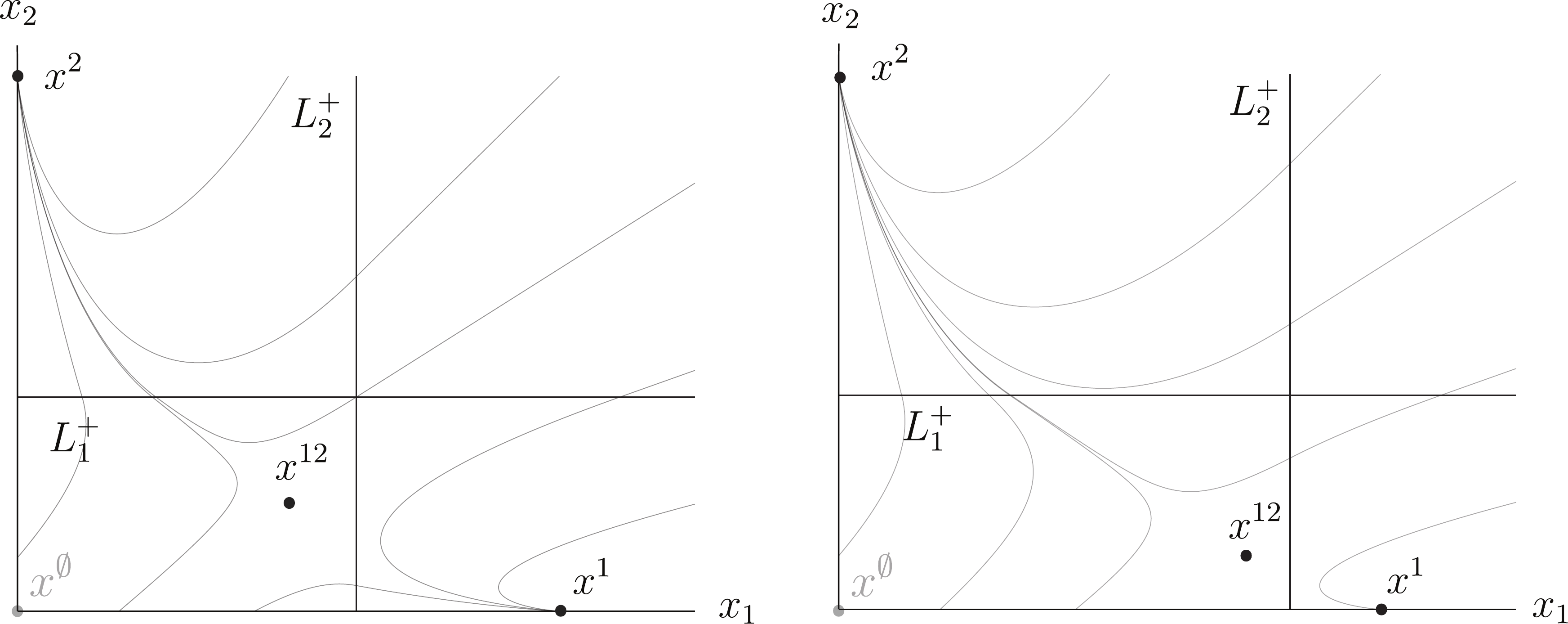}
    \caption{The fixed point supports of this network are $\{1,2,12\}$. The support bifurcation $\{1,2,12\}\to\{2\}$ occurs when the admissible fixed points $x^{12}$ and $x^1$ collide at the boundary $L_2$ and become virtual. }
    \label{fig:switching}
\end{figure}
\vspace{-.05in}

\par  Boundary bifurcations in general piecewise-smooth systems have been studied  in \cite{benardo} and it is known, for instance, that the fixed point either persists or disappears in a non-smooth fold scenario (Figure~\ref{fig:switching}). In this work we study constraints on the sets of admissible fixed point supports and how modulation of the recurrent connectivity and the external input leads to support bifurcations. \\

\noindent {\it The graph of a TLN}
\par The relationship between recurrent connectivity and fixed points has been studied in \cite{CTLN-preprint}, \cite{robust-motifs} and \cite{fp-paper} by considering the quantities $s^{ij}_j:=b_iW_{ji}+b_j$ and encoding their signs in a directed graph.
\begin{definition} Consider a TLN specified by $(W,b)$. For an ordered pair of neurons $(i,j)$ we define the quantity:
\[
s^{ij}_j:= b_iW_{ji}+b_j
\]
and define a directed graph by the rule $i\to j$ if and only if $s^{ij}_j>0$. This is the graph of the TLN.
\end{definition}
Note that in the case where $b_i = \theta > 0$ for all $i$, the graph has an edge $i \to j$ if and only if $W_{ji}>-1$.
The graph of a TLN captures pairwise asymmetries in the network connectivity and reveals strong constraints on the admissible sets of fixed point supports. Here we will show that the quantities $s^{ij}_j$ arise from a particular hyperplane arrangement associated to the network and obtain additional geometric constraints on admissible sets of supports. The overall goal of this work is to determine how the graph constrains the possibilities for the fixed points supports $\operatorname{FP}(W,b)$.\\

\noindent {\it Classification of TLN dynamic regimes for $n=3$}
\par In dimension $n=3$, this leads naturally to the consideration of the following quantities.
 \begin{definition}
 For each ordered triple of neurons $(i,j,k)$ we define the quantity:
 \[
 \Delta^{ij}_k:=b_jW_{ik}-b_iW_{jk}
 \]
 \end{definition}
The signs of these quantities can be viewed as an extension of the directed graph capturing higher order asymmetries in the network connectivity. We show that the bifurcation picture in dimension three has a simple description in terms of modulation of the quantities $s^{ij}_j$ and $\Delta^{ij}_k$.

If we fix the external inputs $b_i$, then to vary a quantity $\Delta^{ij}_k$ it is sufficient to vary the difference of the connectivity weights $W_{ik} - W_{jk}$. In this case, the relevant quantities for graphs on three nodes are simply:
 \[
\Delta W^{ij}_k := W_{ik} - W_{jk}.
\]
These are bifurcation parameters that enable us to transition between different dynamic regimes in $n=3$. Figure~\ref{fig:n3-classification} provides a summary of our results for the 16 directed graphs on three vertices. For each graph, dynamic regimes are given for the set of all competitive TLNs $(W,b)$ having the specified graph, as prescribed above. Gray nodes denote different dynamic regimes and are labeled by the set of fixed point supports
$\operatorname{FP}(W,b)$ that appear for all TLNs in that regime. By changing the quantities $\Delta W^{ij}_k$, as denoted by the blue arrows, one can transition from one dynamic regime to another while preserving the overall architecture of the graph. In some cases, there is only a single dynamic regime. This means all TLNs for the given graph have the same set of fixed point supports. Such graphs are called {\it robust motifs} \cite{robust-motifs}, and are given in the red box. Finally, tan nodes denote the sets of fixed point supports for the combinatorial threshold-linear network (CTLN) corresponding to each graph. CTLNs are a special sub-family of TLNs that were introduced in \cite{CTLN-preprint, fp-paper}, but they are not a focus of this paper.\\

The organization of this paper is as follows. In Section 2 we describe the hyperplane arrangements associated to TLNs, and introduce oriented matroid language that captures combinatorial properties of these arrangements in a geometric picture. 
In Section 3 we introduce the chirotope of a TLN and use it to characterize the admissible fixed point supports.  In Section 4, we relate support bifurcations to mutations of chirotopes.  Finally, in Section 5 we apply the oriented matroid tools to fully classify the possible sets of fixed point supports that can arise for each directed graph of size $n=3$. This is where we prove the various results that are summarized in Figure~\ref{fig:n3-classification}. 

\begin{landscape}
 \begin{figure}[!h]
 \vspace{.5in}
 \begin{center}
    \includegraphics[width=8.75in]{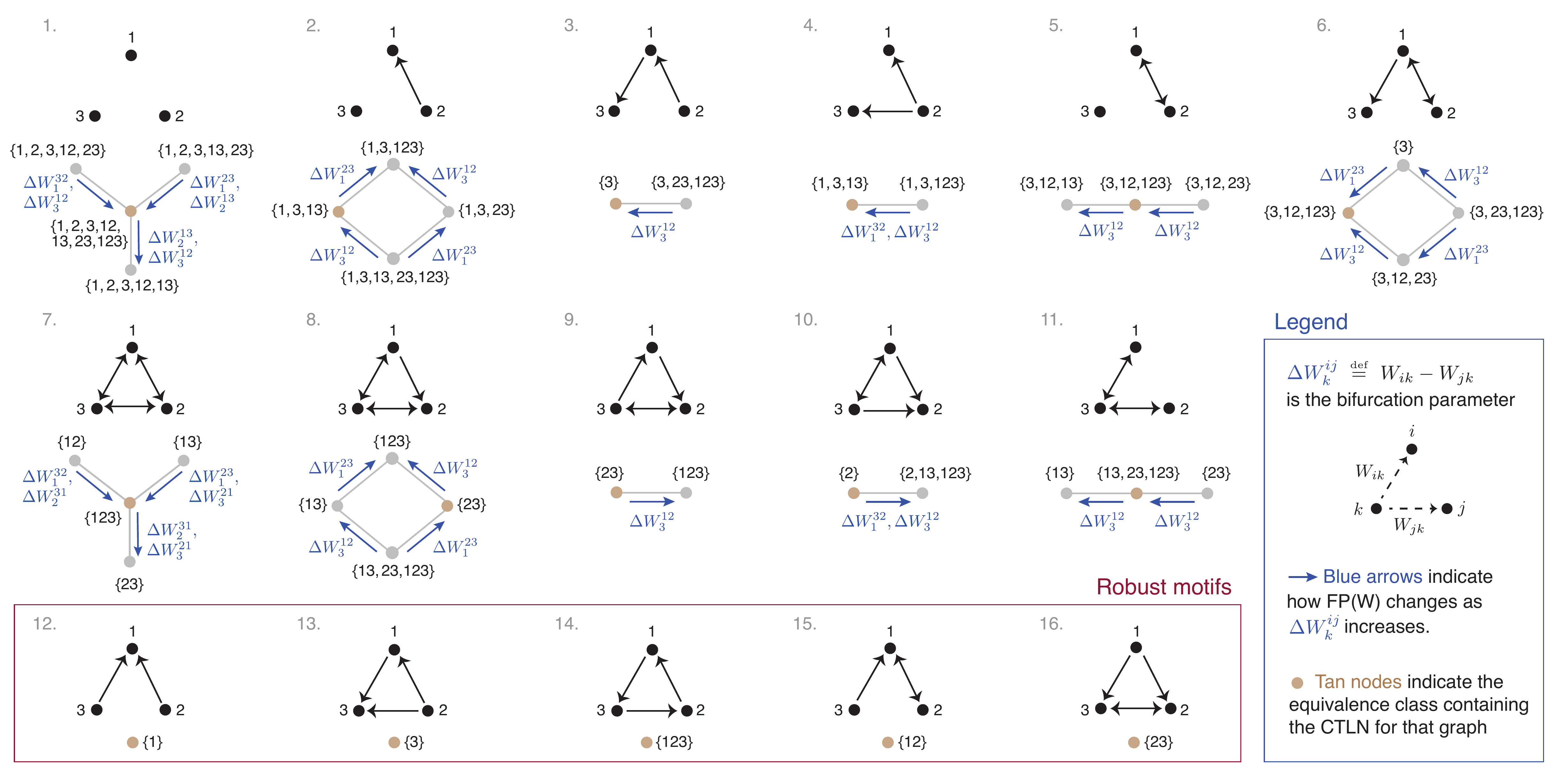}
    \end{center}
    \caption{Classification of dynamic regimes for competitive TLNs on $n=3$ nodes. For each network $(W,b)$, the 
    possible sets of fixed point supports depend on the graph of the TLN. For each graph, the sets of possible supports are given together with prescriptions for how to change the synaptic weights in $W$ to move from one dynamic regime to another, while preserving the network architecture. All graphs provide strong constraints on the possible network dynamics, allowing at most four dynamic regimes. Robust motifs (graphs 12-16) have only a single dynamic regime.}
    \label{fig:n3-classification}
\end{figure}
\end{landscape}

\section{Cocircuits of a threshold-linear network}
In this section we identify an affine hyperplane arrangement associated with any threshold-linear network and show that the combinatorial equivalence class of this arrangement is sufficient to determine the fixed point supports of the network. Specifically, we show that the set of cocircuits of the arrangement determines $\operatorname{FP}(W,b)$.  It follows that support bifurcations correspond to changes in the cocircuits of the arrangement. We establish a correspondence between network parameters and the geometry of the arrangement by connecting the quantities $b_i$, $b_iW_{ji}+b_j$ and $b_iW_{kj}-b_kW_{ij}$ with its cocircuits. Such a connection will allow an understanding of how modulation of network parameters leads to qualitative changes in the network dynamics.

\par To obtain an affine arrangement from a threshold-linear network, consider the linear functionals:
\begin{equation}
\begin{split}
h^*_i(x)&:=-x_i+\sum_{j\not=i}W_{ij}x_j+b_i\\
e^*_i(x)&:=x_i
\end{split}
\end{equation}
for $i=1,...,n$. By taking the zero sets $E_i:=\{e^*_i(x)=0\}$ and $H_i:=\{h^*_i(x)=0\}$ we obtain an arrangement of $2n$ affine hyperplanes in the state space $\mathbb{R}^n$ which we will denote by $\mathcal{A}=\mathcal{A}(W,b)$. Note that the relationship with the linear boundaries is given by the equation:
\begin{equation}\label{glueing}
h^*_i(x)=-e^*_i(x)+l^*_i(x)
\end{equation}
for $i=1,...,n$. The equations of each linear region are constructed from a subset of $\{e^*_1,h^*_1,...,e^*_n,h^*_n\}$ and in particular, the fixed point $x^\sigma$ of the linear region $L^\sigma$ is a vertex of the arrangement $\mathcal{A}$. The arrangement $\mathcal{A}$ can be viewed as the smallest hyperplane arrangement that contains the nullclines of the threshold-linear network. To encode the geometry of this arrangement we consider its cocircuits.
\begin{definition} 
The \textit{cocircuits of a threshold-linear network} we define to be the image of the map
\begin{align*}
\pi:\mathbb{R}^n&\to \{+,0,-\}^{2n}\\
x&\mapsto \big( \text{sgn}\:e^*_i(x),\text{sgn}\:h^*_i(x) \big)_{i=1,...,n}
\end{align*}
restricted to the vertices of the arrangement $\mathcal{A}$.
\end{definition}
For each vertex $x$ in the arrangement $\mathcal{A}$, the cocircuit of $x$ is a sign vector recording the position of $x$ with respect to each hyperplane in $\mathcal{A}$. In general, the collection of cocircuits of a hyperplane arrangement defines an oriented matroid (see, for instance, \cite{OMbook}) and the above definition is just a restriction to the case of arrangements arising from threshold-linear networks. Oriented matroids capture the combinatorial properties of an arrangement and two threshold-linear networks with the same set of cocircuits we will think of as combinatorially equivalent. The following lemma says that two distinct but combinatorially equivalent threshold-linear networks must have the same set of fixed point supports.
\begin{lemma}\label{the_lemma}
A subset of neurons $\sigma$ supports a fixed point if and only if the cocircuit $C^\sigma\in\{+,0,-\}^{2n}$ of $x^\sigma$ satisfies:
\[
C^\sigma(E_i)=+ \text{ and } C^\sigma(H_j)=-
\]
for $i\in\sigma$ and $j\not\in\sigma$.
\end{lemma}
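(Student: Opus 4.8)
The plan is to translate the admissibility condition, which is stated in terms of the cell-defining functionals $l^*_i$, into a condition on the signs of $e^*_i$ and $h^*_i$ at the vertex $x^\sigma$, using the gluing relation (\ref{glueing}). The first step is to pin down which entries of the cocircuit $C^\sigma$ are already forced to vanish. Working under the genericity assumption that $x^\sigma$ is the unique fixed point of the linear system (\ref{eq:1}), I would set $\dot{x}_i = 0$ in that system: for $i \in \sigma$ this gives $h^*_i(x^\sigma) = 0$, and for $i \notin \sigma$ it gives $x^\sigma_i = e^*_i(x^\sigma) = 0$. Thus $x^\sigma$ is exactly the vertex of $\mathcal{A}$ lying on $H_i$ for $i \in \sigma$ and on $E_i$ for $i \notin \sigma$, so $C^\sigma(H_i) = 0$ for $i \in \sigma$ and $C^\sigma(E_i) = 0$ for $i \notin \sigma$. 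The only entries whose signs remain undetermined are therefore $C^\sigma(E_i)$ for $i \in \sigma$ and $C^\sigma(H_j)$ for $j \notin \sigma$, which are precisely the entries named in the statement.

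Next I would recall that $\sigma$ supports a fixed point exactly when $x^\sigma$ is admissible, i.e. $x^\sigma \in L^\sigma$, which by definition of the cell means $l^*_i(x^\sigma) > 0$ for $i \in \sigma$ and $l^*_i(x^\sigma) < 0$ for $i \notin \sigma$. The key algebraic step is to rewrite $l^*_i$ at $x^\sigma$ via (\ref{glueing}) as $l^*_i = e^*_i + h^*_i$. For $i \in \sigma$ we have $h^*_i(x^\sigma) = 0$, so $l^*_i(x^\sigma) = e^*_i(x^\sigma)$ and the condition $l^*_i(x^\sigma) > 0$ is equivalent to $e^*_i(x^\sigma) > 0$, i.e. $C^\sigma(E_i) = +$. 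Symmetrically, for $j \notin \sigma$ we have $e^*_j(x^\sigma) = 0$, so $l^*_j(x^\sigma) = h^*_j(x^\sigma)$ and the condition $l^*_j(x^\sigma) < 0$ is equivalent to $h^*_j(x^\sigma) < 0$, i.e. $C^\sigma(H_j) = -$. Since each equivalence runs in both directions, chaining them over all $i \in \sigma$ and all $j \notin \sigma$ yields the claimed biconditional.

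I do not anticipate a serious obstacle; the argument is essentially bookkeeping built on the decomposition (\ref{glueing}), and the two directions of the lemma come out together from the same chain of equivalences. The one point that will require care is the sign convention: I must check that the $-x_i$ term in the off-support equations is absorbed correctly, so that vanishing of $h^*_i$ on $\sigma$ and of $e^*_i$ off $\sigma$ really are the vertex-defining equations for $x^\sigma$, and that the strict inequalities cutting out $L^\sigma$ line up with the $+$ and $-$ entries rather than with their negations. Verifying that $l^*_i$ reduces to $e^*_i$ on the support and to $h^*_i$ off the support, with the inequality senses preserved, is where the entirety of the (light) work lies.
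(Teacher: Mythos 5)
Your proof is correct and is essentially the paper's own argument: both derive $h^*_i(x^\sigma)=0$ on $\sigma$ and $e^*_j(x^\sigma)=0$ off $\sigma$ from the definition of $x^\sigma$, then use the gluing relation $l^*_i = e^*_i + h^*_i$ to reduce membership in $L^\sigma$ to the signs $C^\sigma(E_i)=+$ and $C^\sigma(H_j)=-$. Your added verification that these vanishing conditions come from setting $\dot{x}_i=0$ in the linear system is just an unpacking of what the paper calls ``by definition of $x^\sigma$.''
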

\begin{proof}
By definition of $x^\sigma$, we have $h^*_i(x^\sigma)=0$ and $e^*_j(x^\sigma)=0$ for $i\in\sigma$ and $j\not\in\sigma$. From equation \eqref{glueing} with $x=x^\sigma$, we have the equalities:
\[
l^*_i(x^\sigma)=e^*_i(x^\sigma) \text{ and } l^*_j(x^\sigma)=h^*_j(x^\sigma)
\]
for $i\in\sigma $ and $j\not\in\sigma$. By taking signs, we have that $x^\sigma$ lies in the linear cell $L^\sigma$ if and only if $\text{sgn }e^*_i(x^\sigma)=+$ and $\text{sgn }h^*_j(x^\sigma)=-$.
\end{proof}
If follows that support bifurcations correspond to changes in the cocircuits of the arrangement.  Note that such a change occurs when the network parameters are varied in such a way that a hyperplane is pushed over a vertex.  To illustrate this, we revisit the bifurcation in Figure~\ref{fig:switching} but from the perspective of the arrangement $\mathcal{A}$ and its cocircuits.
\begin{example}
\begin{figure}[h]
    \centering
    \includegraphics[width=5.5in]{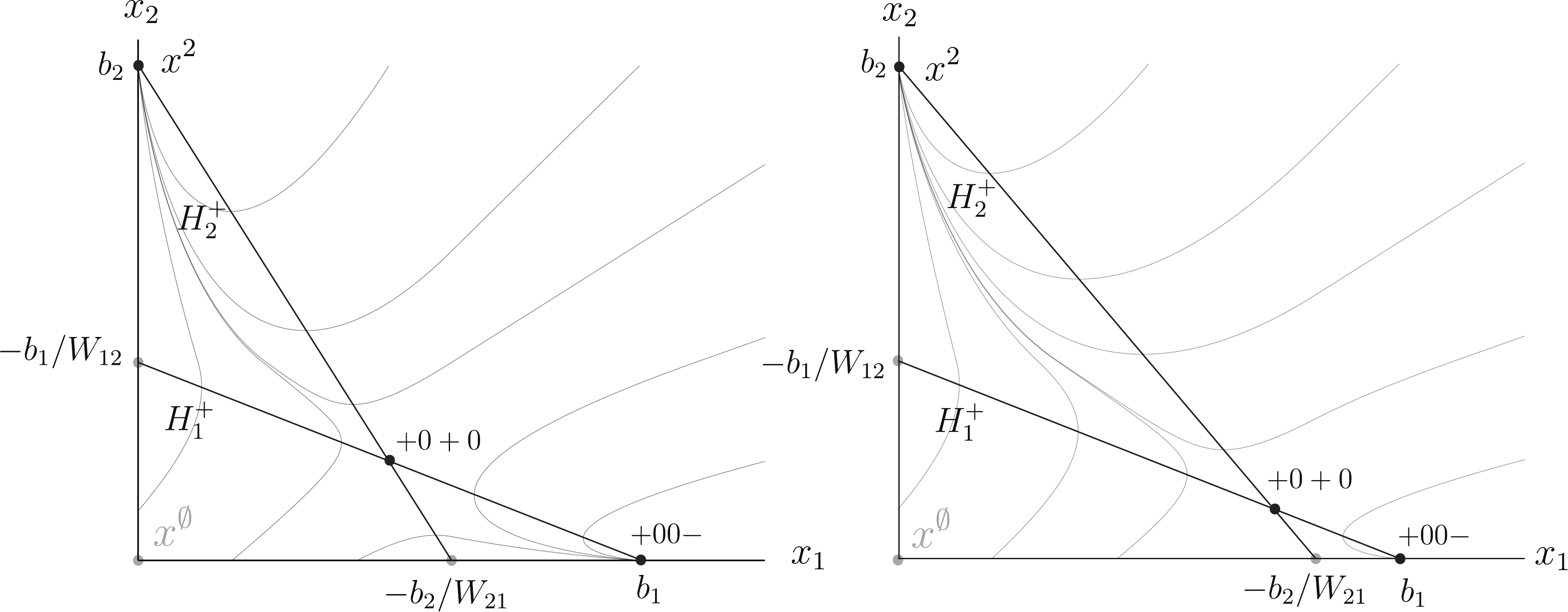}
    \caption{The support  bifurcation from Figure~\ref{fig:switching} from the perspective of the hyperplane arrangement $\mathcal{A}$. The fixed points $x^{12}$ and $x^1$ colliding at the boundary $L_2$ and crossing can equivalently be viewed as the hyperplane $H_2$ being pushed over the vertex $x^1$ changing the cocircuits of the arrangement and hence its combinatorial equivalence class. }
    \label{fig:2d_bif}
\end{figure} 
In the two dimensional case the arrangement $\mathcal{A}=\{E_1,H_1,E_2,H_2\}$ is an arrangement of four lines in the plane. From the definition of the linear functions $h^*_1$ and $h^*_2$, we see that the lines $\{H_1,H_2\}$ are oriented with the origin on their positive side (Figure~\ref{fig:2d_bif}).  This information is sufficient to determine the cocircuits of the arrangement and we see that the bifurcation in Figure~\ref{fig:switching} corresponds to the change in cocircuits:
\begin{align*}
    C^1:&\: (+,0,0,-)\to (+,0,0,+) \\
    C^{12}:&\: (+,0,+,0) \to (+,0,-,0)
\end{align*}
where each cocircuit records the position of the fixed point with respect to the ordered set of hyperplanes $\{E_1,H_1,E_2,H_2\}$. In other words, $x^{12}$ and $x^1$ crossing the boundary $L_2$ can equivalently be viewed as $x^{12}$ crossing the hyperplane $E_2$ and $x^{1}$ crossing the hyperplane $H_2$. If we observe  that the intersection of $H_2$ with $E_2$ is given by $x_1=-b_2/W_{21}$ and $x_2=0$, we see that this bifurcation can be realized by increasing the negative synaptic weight $W_{21}$ toward zero, for instance. 
\end{example}
In this way, we view support bifurcations as changes in the combinatorial geometry of the arrangement arising from modulation of network parameters. In the following three lemmas we translate the network parameters into geometric properties of the hyperplane arrangement. In particular, we work out the relationship between network parameters and the restriction of the arrangement to each coordinate axis. Although the intersection of an arrangement with the coordinate axes does not uniquely determine the arrangement, in dimension three an understanding of this geometry is sufficient for a complete understanding of support bifurcations.
\begin{lemma}\label{lemma1}
The intersection of the hyperplane $H_i$ with the $x_i$-axis lies on the positive side of $E_i$ if and only if $b_i>0$.
\end{lemma}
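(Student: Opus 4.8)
The plan is to compute the intersection point explicitly by restricting the defining functional $h_i^*$ to the $x_i$-axis, and then to read off the sign of the resulting $i$-th coordinate against $E_i$. First I would parametrize the $x_i$-axis as the set of points $x$ satisfying $x_i = t$ for some $t \in \mathbb{R}$ together with $x_j = 0$ for all $j \neq i$.

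The main computation is to substitute this parametrization into $h_i^*(x) = -x_i + \sum_{j \neq i} W_{ij} x_j + b_i$. The key simplification is that each off-diagonal contribution $W_{ij} x_j$ vanishes on the axis, since $x_j = 0$ for every $j \neq i$. This collapses the functional to $h_i^*(x) = -t + b_i$, so the unique point where $H_i$ meets the $x_i$-axis is the one with $t = b_i$; that is, the intersection has $i$-th coordinate equal to $b_i$ and all other coordinates zero.

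Finally I would compare this point against the hyperplane $E_i$. By definition, the positive side of $E_i$ is the half-space $\{e_i^*(x) > 0\} = \{x_i > 0\}$, so the intersection point lies on the positive side of $E_i$ exactly when its $i$-th coordinate $b_i$ is positive. This is precisely the asserted equivalence. There is no substantive obstacle in this argument: the only points requiring attention are the bookkeeping that the linear cross-terms drop out along the axis, and the observation that membership in the positive side of $E_i$ is controlled entirely by the sign of $e_i^* = x_i$, both of which follow directly from the definitions of $h_i^*$ and $e_i^*$.
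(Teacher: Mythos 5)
Your proof is correct and follows essentially the same route as the paper: both compute the intersection point explicitly by setting $x_j=0$ for $j\neq i$ (so the cross-terms $W_{ij}x_j$ vanish), solve $h_i^*=0$ to get $i$-th coordinate $b_i$, and read off the sign against $e_i^*$. The only cosmetic difference is that you parametrize the axis with a variable $t$ while the paper describes the point as $H_i\cap\bigcap_{j\neq i}E_j$; the substance is identical.
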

\begin{proof}
The intersection of $H_i$ with the $x_i$-axis is given by:
\[
x^i:=H_i\cap \bigcap_{j\not=i}E_j
\]
Since $H_i$ is given by the zero set:
\[
H_i:=\big\{-x_i+\sum_{j\not=i}W_{ji}x_j + b_i=0\big\}
\]
it follows that $x^i$ is defined by the equations $x_i=b_i$ and $x_j=0$ for $j\not=i$. Thus, $x^i$ lies on the positive side of the coordinate hyperplane $E_i$ if and only if $e^*_i(x^i)=b_i>0$. 
\end{proof}

\begin{lemma}\label{lemma2}
The intersection of $H_i$ with the $x_i$-axis lies on the positive side of $H_j$ if and only if $s^{ij}_j=b_iW_{ji}+b_j>0$.
\end{lemma}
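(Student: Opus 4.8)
The plan is to reduce the claim to a single evaluation of the defining functional of $H_j$ at the axis point already identified in the proof of Lemma~\ref{lemma1}. First I would recall that the point
\[
x^i = H_i \cap \bigcap_{k\neq i} E_k
\]
has coordinates $x_i = b_i$ and $x_k = 0$ for all $k \neq i$; that is, $x^i$ is the point at distance $b_i$ along the $x_i$-axis. Since the side of $x^i$ relative to $H_j$ is by definition the sign of $h^*_j(x^i)$, the whole lemma amounts to computing this one number and reading off its sign.

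Next I would substitute these coordinates into the functional
\[
h^*_j(x) = -x_j + \sum_{k\neq j} W_{jk} x_k + b_j .
\]
Because $j \neq i$, the term $-x_j$ vanishes at $x^i$. In the sum $\sum_{k\neq j} W_{jk} x_k$ every coordinate of $x^i$ is zero except the $i$-th, so the only surviving summand is the one with $k = i$, contributing $W_{ji} b_i$. Adding $b_j$ gives
\[
h^*_j(x^i) = b_i W_{ji} + b_j = s^{ij}_j .
\]
Taking signs then shows that $x^i$ lies on the positive side of $H_j$ if and only if $s^{ij}_j > 0$, which is exactly the assertion.

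This is essentially a direct calculation, so I do not expect a serious obstacle; the only point demanding care is the index bookkeeping. One must verify that the surviving coefficient in the sum is $W_{ji}$ rather than $W_{ij}$, since it is precisely this ordering that makes $h^*_j(x^i)$ equal to $s^{ij}_j = b_i W_{ji} + b_j$ and not some other combination of the weights. Tracking which index is summed over ($k = i$) and which is held fixed ($j$) in the definition of $h^*_j$ settles the matter.
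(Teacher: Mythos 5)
Your proof is correct and matches the paper's argument exactly: both identify $x^i$ via Lemma~\ref{lemma1} as the point with $x_i = b_i$ and all other coordinates zero, then evaluate $h^*_j(x^i) = b_iW_{ji}+b_j$ and read off the sign. Your extra remark about checking that the surviving coefficient is $W_{ji}$ rather than $W_{ij}$ is sound bookkeeping but adds nothing beyond the paper's one-line computation.
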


\begin{proof}
The position of $x^i$ with respect to the hyperplane $H_j$
is obtained by evaluating $h^*_j$ at $x_i$:
\[
h^*_j(x^i) =\big( -x_j+\sum_{k\not= j}W_{jk}x_k+b_j\big )\bigg\rvert_{x^i} = b_iW_{ji}+b_j
\]
Thus, $x^i$ lies on the positive side of $H_j$ if and only if $b_iW_{ji}+b_j>0$.
\end{proof}

The above lemmas imply that a single neuron $i$ supports a fixed point only if the external input to that neuron is positive and $s^{ij}_j=b_iW_{ji}+b_j<0$ for all $j$. If we encode the signs of $s^{ij}_j=b_iW_{ji}+b_j$ in a directed graph, then we have that a single neuron supports a fixed point if and only if the external input to $i$ is positive and the node corresponding to $i$ is a sink. The bifurcation in Figure~\ref{fig:2d_bif} can be viewed from this perspective as varying the network parameters such that the edge $1\to 2$ is added to $G$.  In dimension two, all bifurcations of competitive threshold-linear networks with uniform positive external input are obtained as changes to the underlying graph $G$,\cite{robust-motifs}. In dimension three this is no longer the case and it becomes necessary to consider higher order interactions between triples of neurons.
\begin{lemma}
The intersection of $H_i$ with the $x_j$-axis lies on the positive side of $H_k$ if and only if $\Delta^{ki}_j=b_iW_{kj}-b_kW_{ij}>0$.
\end{lemma}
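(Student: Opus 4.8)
The plan is to mirror the computation in the proof of Lemma~\ref{lemma2}: first locate the relevant vertex on the $x_j$-axis, then evaluate the functional $h^*_k$ there and read off the sign. Write $p$ for the intersection of $H_i$ with the $x_j$-axis, i.e. $p := H_i \cap \bigcap_{m\neq j} E_m$. On the $x_j$-axis every coordinate except $x_j$ vanishes; in particular $x_i = 0$ since $i \neq j$. Substituting into the defining equation $h^*_i(p) = -x_i + \sum_{m\neq i} W_{im} x_m + b_i = 0$ leaves only the $m = j$ term, giving $W_{ij} x_j + b_i = 0$. Because the network is competitive we have $W_{ij} < 0 \neq 0$, so this has the unique solution $x_j = -b_i / W_{ij}$; thus $p$ is the point whose $j$-th coordinate is $-b_i/W_{ij}$ and whose other coordinates are all zero. (The nonvanishing of $W_{ij}$ is exactly what guarantees that $H_i$ actually meets the axis.)

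Next I would evaluate $h^*_k$ at $p$. Since $k \neq j$, the only surviving term of $\sum_{m\neq k} W_{km} x_m$ is the $m=j$ term, so
\[
h^*_k(p) = W_{kj}\!\left(-\frac{b_i}{W_{ij}}\right) + b_k = b_k - \frac{b_i W_{kj}}{W_{ij}}.
\]
Clearing the denominator gives $W_{ij}\, h^*_k(p) = b_k W_{ij} - b_i W_{kj} = -\Delta^{ki}_j$, and hence $h^*_k(p) = -\Delta^{ki}_j / W_{ij}$.

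Finally I would determine the sign, which is the only delicate step. Dividing by $W_{ij}$ flips the inequality, and this is precisely where the competitive hypothesis $W_{ij} < 0$ is essential: since $-1/W_{ij} > 0$, we obtain $\text{sgn}\, h^*_k(p) = \text{sgn}\,\Delta^{ki}_j$. Therefore $p$ lies on the positive side of $H_k$ (that is, $h^*_k(p) > 0$) if and only if $\Delta^{ki}_j > 0$, as claimed. The main thing to keep straight is exactly this sign bookkeeping coming from the negative weight in the denominator; everything else is the same "substitute the axis vertex and evaluate the functional" routine used in the two preceding lemmas.
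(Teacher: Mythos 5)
Your proof is correct and follows essentially the same route as the paper's: locate the vertex $p = H_i \cap \bigcap_{m\neq j} E_m$ with $x_j = -b_i/W_{ij}$, evaluate $h^*_k(p) = W_{kj}(-b_i/W_{ij}) + b_k$, and use $W_{ij}<0$ to flip the inequality when clearing the denominator. Your explicit identity $h^*_k(p) = -\Delta^{ki}_j/W_{ij}$ and the remark that $W_{ij}\neq 0$ guarantees the intersection exists are minor elaborations of the paper's argument, not a different approach.
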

\begin{proof}
Let $p_{ij}$ denote the intersection of $H_i$ with the $x_j$ axis. We have
\[
p_{ij}:=H_i\cap \bigcap_{l\not=j}E_l
\]
and so $p_{ij}$ is defined by $x_j=-b_i/W_{ij}$ and $x_l=0$ for $l\not=j$. The position of $p_{ij}$ with respect to $H_k$ is then obtained by evaluating:
\[
h^*_k(p_{ij})=-x_k+\sum_{l\not=k}W_{kl}x_l+b_k\bigg\rvert_{p_{ij}}=W_{kj}(-b_i/W_{ij})+b_k
\]
Using our assumption of competitiveness and multiplying by $W_{ij}<0$, we have that $h^*_k(p_{ij})>0$ if and only if $b_iW_{kj}-b_kW_{ij}>0$.
\end{proof}

In the following example we illustrate the correspondences between network parameters and geometry established in the above lemmas by exhibiting some support bifurcations in dimension three.
\begin{example}\label{Example1} Consider the threshold-linear network defined by the parameters
\[
W=\small\begin{bmatrix}
0&  -0.97 &  -1.47\\
   -0.65  & 0 &  -0.57\\
   -1.34 &  -1.45  & 0\\
\end{bmatrix}
\hspace{.25cm} b=\begin{bmatrix}
0.49\\
    0.40\\
    0.62
\end{bmatrix}
\]
In dimension three the arrangement $\mathcal{A}$ consists of six hyperplanes in $\mathbb{R}^3$. The arrangement for this particular network is shown   Figure~\ref{Arrangements 1}.  The hyperplanes $\{E_1,H_1\}$, $\{E_2,H_2\}$ and $\{E_3,H_3\}$ are colored blue, red and yellow, respectively, and $H_1$, $H_2$ and $H_3$ are all oriented with the origin on the positive side. 
  \begin{figure}[h]
    \centering
    \includegraphics[width=6in]{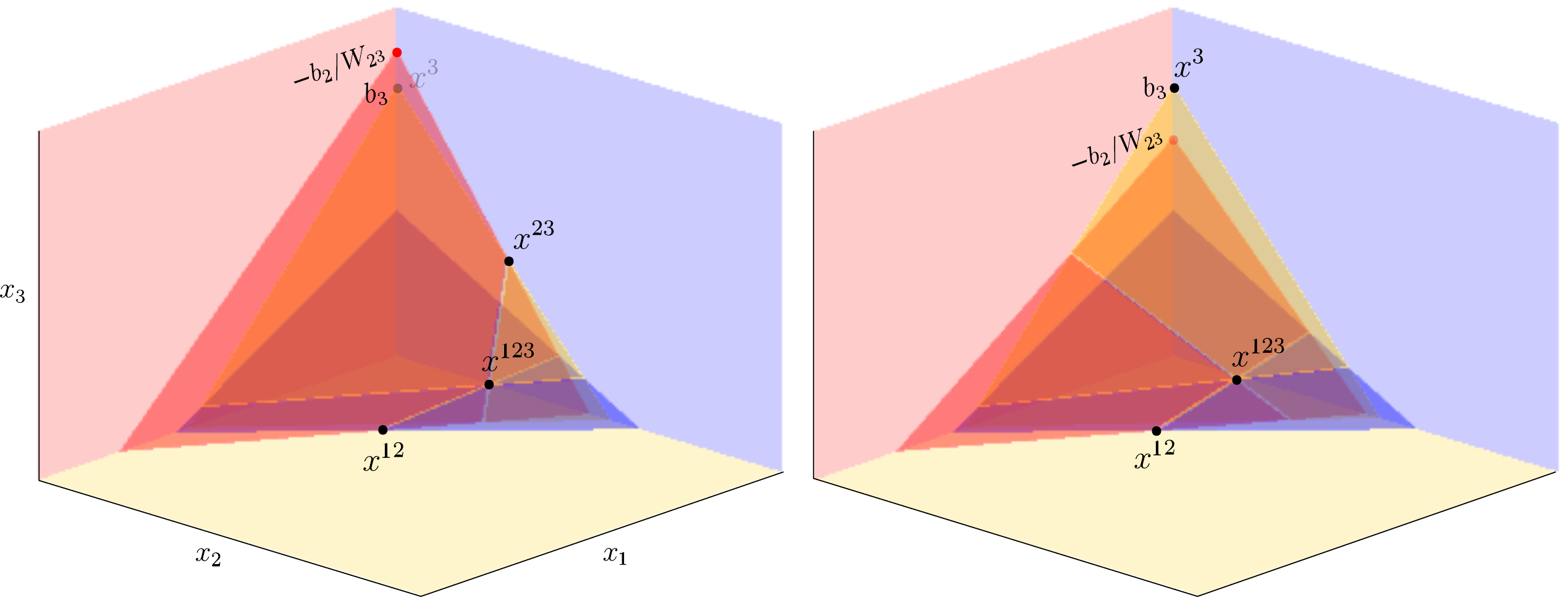}
    \caption{Varying the parameters of the network such that the quantity $b_3W_{23}+b_2$ changes sign changes the relative positions of the hyperplane $H_2$ and $H_3$ along the $x_3$-axis leading to the persistent support bifurcation $\{23,12,123\}\to \{3,12,123\}$. }
    \label{Arrangements 1}
\end{figure} 
 From the arrangement we can read off cocircuits and determine that  $\operatorname{FP}(W,b)=\{12,123,23\}$. To see how modulation of network parameters leads to changes in this set consider flipping the sign of the quantity $b_3W_{23}+b_2$ by letting $W_{23}\to -0.8$. In the arrangement, this pushes the red hyperplane $H_2$ over the virtual fixed point $x^3$. Moreover, as this is done, the fixed point $x^{23}$ collides with $x^3$ and changes position with respect to $E_2$ and we have the persistent support bifurcation $\{23\}\to \{3\}$. If we encode the signs of $b_iW_{ji}+b_j$ in a directed graph, then this bifurcation corresponds to deleting an edge so that neuron three becomes a sink (Figure~\ref{graph_11}).

 \begin{figure}[h]
    \centering
     \includegraphics[width=3in]{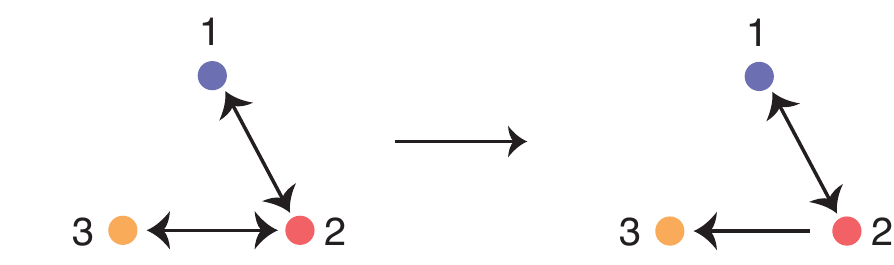}
    \caption{The directed graph of a threshold-linear network is determined by the signs of the quantities $b_iW_{ji}+b_j$. Such a graph captures pairwise asymmetries in the connectivity of the network. The bifurcation in the previous figure corresponds to deleting the edge $3\to 2$ in this graph. }
    \label{graph_11}
\end{figure} 
Although this bifurcation is obtained by letting $W_{23}\to -0.8$, we can similarly consider varying either of the external inputs $b_2$ or $b_3$ in order to flip the sign of $b_3W_{23}+b_2$. Consider for instance, letting $b_2\to 0.25$. While this has the effect of flipping the sign of $b_3W_{23}+b_2$ and realizing the bifurcation $\{23\}\to \{3\}$, varying the external input in this way has a less localized effect on the network and in fact leads to several other bifurcations in addition. Specifically, decreasing the external input to neuron two in this way produces three persistent bifurcations $\{12\}\to \{1\}$, $\{123\}\to \{13\}$ and $\{23\}\to \{3\}$. In sum, the effect is to change the admissible sets from $\operatorname{FP}(W,b)=\{12,123,23\}$ to $\operatorname{FP}(W,b)=\{1,13,3\}$. The reason for this effect is that the parameter $b_2$ is involved in determining the intersection of $H_2$ with each of the coordinate axes. Thus a perturbation of $b_2$ moves each of these vertices simultaneously leading to less localized changes in the arrangement. In general, an understanding of how perturbations of network parameters leads to changes in the collection of admissible fixed points requires an understanding of the local perturbations of the hyperplane arrangement which we will develop in later sections.

\begin{figure}[h]
\centering
\includegraphics[width=6in]{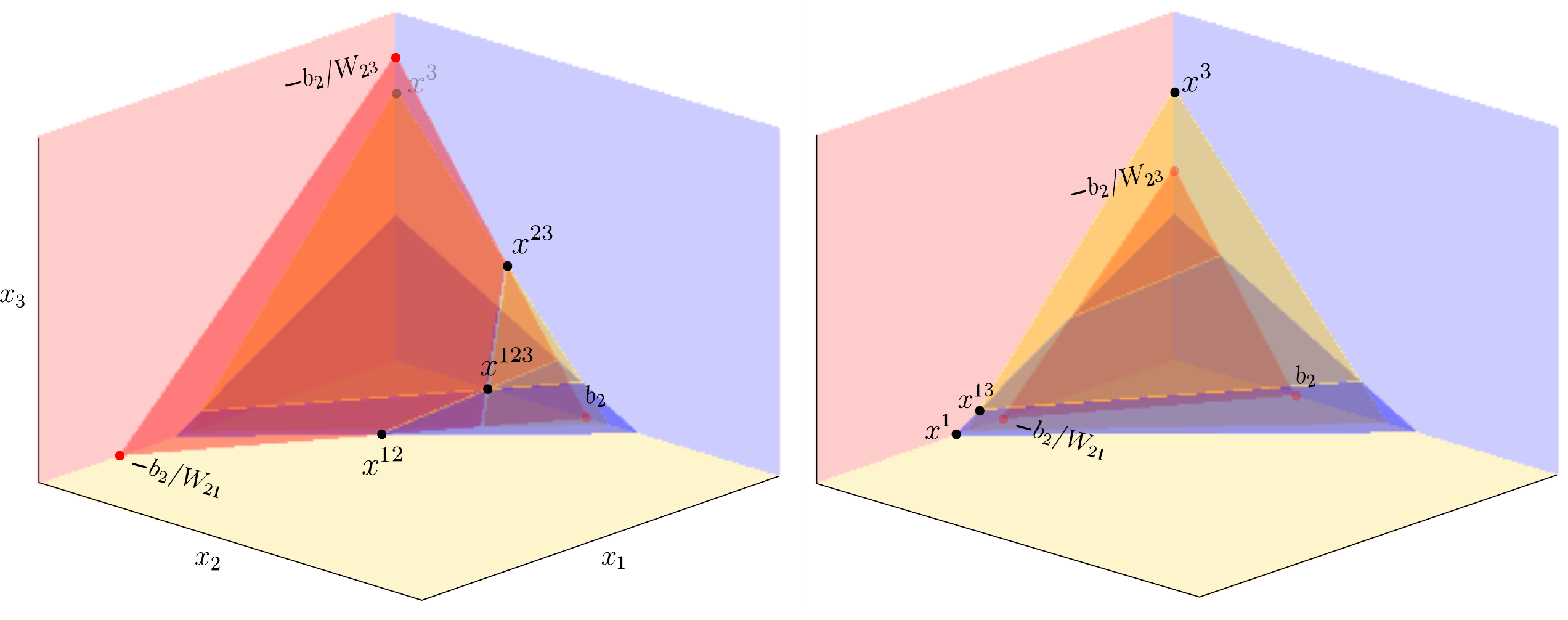}
\caption{Varying the external input leads to multiple support bifurcations. Here, the perturbation $b_2:0.4\to0.25$ contracts the hyperplane $H_2$ along all three coordinate axes simultaneously producing three persistent bifurcations $\{12\}\to \{1\}$, $\{123\}\to \{13\}$ and $\{23\}\to \{3\}$ changing $\operatorname{FP}(W,b)=\{12,123,23\}$ to  $\operatorname{FP}(W,b)=\{1,13,3\}$. }
\label{fig:A_2}
\end{figure}

\par Consider now the quantity $\Delta^{31}_2=b_1W_{32}-b_3W_{12}$. The sign of this quantity determines the position of the intersection of $H_1$ with the $x_2$-axis with respect to the hyperplane $H_3$. In other words, increasing this quantity away from zero increases the separation of the hyperplanes $H_1$ and $H_3$ along the $x_2$-axis. This can be accomplished by increasing $W_{12}$ toward zero and decreasing $W_{32}$ (Figure~\ref{fig: A_3}.) The effect of this is a non-smooth fold bifurcation $\{123,23\}\to \emptyset$ and a change in the admissible sets from $\operatorname{FP}(W,b)=\{12,123,23\}$ to $\operatorname{FP}(W,b)=\{12\}$.
\begin{figure}[h]
\centering
\includegraphics[width=6in]{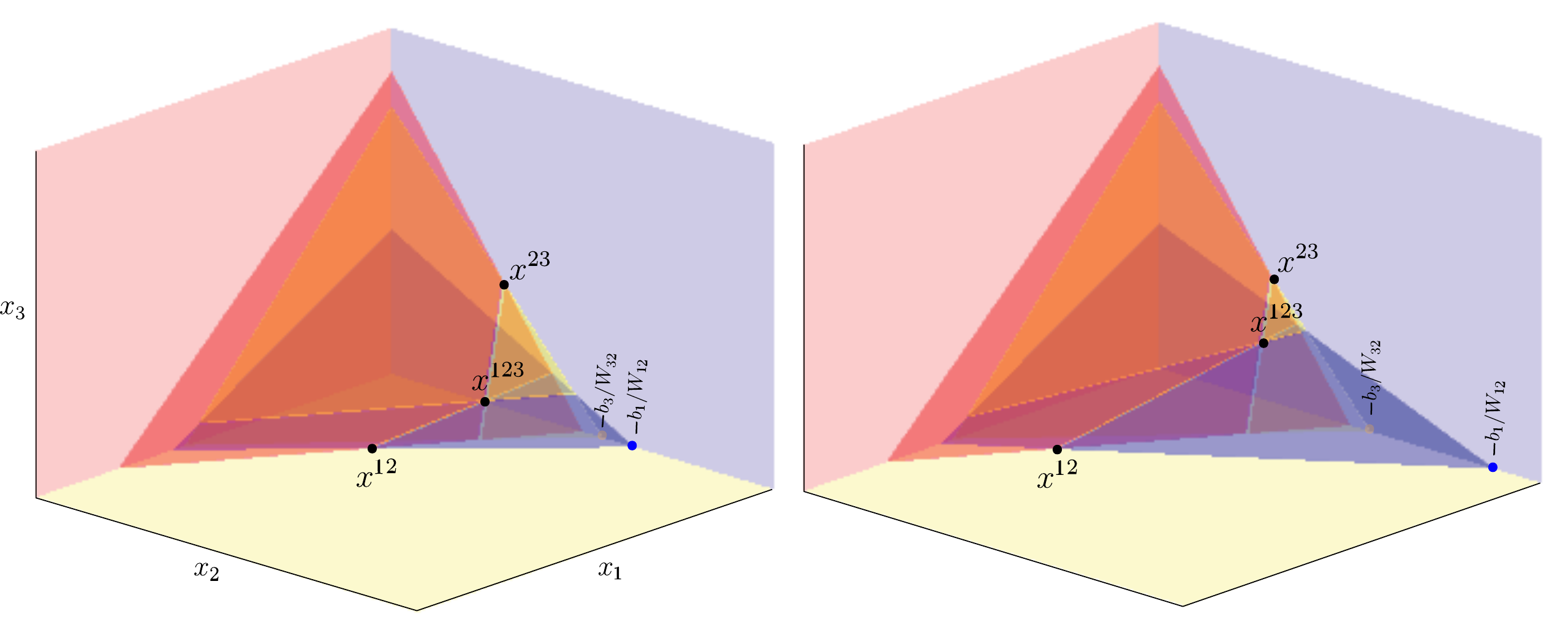}
\caption{The non-smooth fold bifurcation $\{123,23\}\to \emptyset$ can be obtained by varying the quantity $\Delta^{31}_2=b_1W_{32}-b_3W_{12}$ which corresponds to separating the hyperplanes $H_1$ and $H_3$ along the $x_2$ axis. This bifurcation can equivalently be viewed as flipping the simplicial cell bounded by the hyperplanes $\{H_1,H_2,H_3,E_1\}$. We will show in section four that all generic support bifurcations in dimension three correspond to flipping a simplicial cell.  }
\label{fig: A_3}
\end{figure} 
Note that this bifurcation does not change the underlying directed graph as it does not change the position of $H_1$ or $H_3$ with respect to $H_2$ along the $x_2$-axis. Another important observation here is that this bifurcation corresponds to flipping a simplicial cell in the arrangement. If we consider the decomposition of the hyperplane arrangement $\mathcal{A}$ into cells, then an $n$-dimensional cell is called simplicial if it has a minimal number of vertices and faces. The bifurcation in Figure~\ref{fig: A_3} corresponds to flipping the simplicial cell bounded by the hyperplanes $H_1$, $H_2$ and $H_3$ and $E_1$. For an arrangement in general position, the smallest possible changes are in one-to-one correspondence with its simplicial cells. Thus, the set of support bifurcations obtainable by perturbing network parameters is constrained by the set of simplices in the arrangement. By considering the chirotope of the arrangement we will be able to express these constraints algebraically.

\end{example}

\section{The chirotope of a threshold-linear network}
In this section we use the equivalence of cocircuits and chirotopes to express the combinatorial geometry of the previous section in terms of determinants. In later sections we will use constraints on chirotopes in the form of Grassmann-Pl\"ucker relations to obtain constraints on support bifurcations. These relations can be viewed geometrically as constraints on the existence of simplicial cells in the hyperplane arrangement. 

\par We begin by homogenizing the affine hyperplanes in $\mathcal{A}$ by adding a variable $z$ and its corresponding hyperplane $E_\infty:=\{z=0\}$. This gives an arrangement of $2n+1$ hyperplanes in $\mathbb{R}^{n+1}$ which by abuse of notation we will still denote $\mathcal{A}=\{E_1,H_1,...,E_n,H_n,E_\infty\}$. Note that the original affine arrangement is recovered by intersecting the new arrangement with the hyperplane $\{z=1\}$ and vertices of the affine arrangement correspond to lines in the hyperplane arrangement. The coefficients of the equations defining $\mathcal{A}$ define a configuration of vectors  $\{e_1,h_1,...,e_n,h_n,e_\infty\}\subset\mathbb{R}^{n+1}$ which are normal to the hyperplanes in $\mathcal{A}$ and give an equivalent description of the hyperplane arrangement. We will think of these vectors arranged as the rows of a matrix of shape $(2n+1)\times (n+1)$ which we will again refer to as $\mathcal{A}$:
\begin{equation}\label{matrix}
\mathcal{A}:=\small\begin{pmatrix}
1 & 0 & \cdots & 0& 0\\
-1 & W_{12} & \cdots & W_{1n}& b_1\\
 \vdots&\vdots & \ddots &\vdots  &\vdots\\
0 & 0 &\cdots & 1 & 0\\
W_{n1} & W_{n2} & \cdots & -1 & b_n\\
0 & 0 & \cdots & 0 & 1
\end{pmatrix}
\end{equation}
Note that both the threshold linear network and the affine hyperplane arrangement from the previous section are completely specified by the matrix $\mathcal{A}$. 
\begin{definition}
The  \textit{chirotope of a threshold-linear network} we define to be the alternating map:
\begin{align*}
\chi: \mathcal{A}^{n+1}&\longrightarrow \{+,0,-\}\\
(a_1,\dots,a_{n+1})&\longmapsto \text{sign}\:\text{det}(a_1,...,a_{n+1})
\end{align*}
where $a_i$ are vectors corresponding to rows of the matrix $\mathcal{A}$.
\end{definition}

\par The terminology comes from the word $\textit{chiral}$ and the fact that $\chi(a)$ can be viewed as the orientation of the basis defined by the ordered collection of vectors $a\in\mathcal{A}^{n+1}$. A chirotope can be associated to any hyperplane arrangement or equivalently to any configuration of vectors and the above definition is simply a restriction to the case of arrangements arising from threshold-linear networks. In order to express the results of the previous section in the language of chirotopes we define the ordered subset $a^\sigma\in\mathcal{A}^{n+1}$ where the $i$-th vector is given by:
\[
a^\sigma(i):=\begin{cases}
h_i \text{ if } i \in \sigma\\
e_i \text{ if } i \not\in\sigma
\end{cases}
\]
The cocircuit of the point $x^\sigma$ in the affine arrangement is then recovered from the chirotope via the two relations:
\begin{equation}\label{translation}
\begin{split}
C^\sigma(E_i)&=\chi(a^\sigma,e_i) \chi(a^\sigma,e_\infty) \\
C^\sigma(H_i)&=\chi(a^\sigma,h_i) \chi(a^\sigma,e_\infty)
\end{split}
\end{equation}

 This translation between cocircuits and chirotopes is a reflection of their underlying structure as oriented matroids and a proof can be found in \cite{OMbook}. To characterize fixed point supports in terms of $\chi$ we define the subset of determinants:
\begin{equation}\label{s_def}
s^\sigma_i:= \begin{cases}
\text{det}(a^\sigma,e_i) \text{ if } i\in\sigma\cup \infty \\
\text{det}(a^\sigma,h_i) \text{ if } i\not\in\sigma
\end{cases}
\end{equation}
Note then that we have $\text{sgn } s^\sigma_i:= \chi(a^\sigma,e_i)$ for $i\in\sigma\cup \infty$ and $\text{sgn } s^\sigma_i:= \chi(a^\sigma,h_i)$ for $i \not\in \sigma$.  
Also, from the alternating property of determinant, $s^\sigma_i=-s^{\sigma\backslash i}_i$. This notation is borrowed from \cite{fp-paper} where a related collection of determinants $\text{det}((I-W_{\sigma\cup \{i\}})_i;b_{\sigma\cup\{i\}})$ is arrived at via Cramer's rule. The following theorem is a translation of Lemma \ref{the_lemma} and Theorem 2.12 from \cite{fp-paper} in terms of $\chi$.

\begin{theorem}\label{theorem1}
 A subset of neurons $\sigma$ supports a fixed point if and only if for all $i\in\sigma$ and $j\not\in\sigma$:
\[
\text{sgn}\: s^\sigma_i\cdot s^\sigma_\infty=+\hspace{.1cm} \text{ and  }\hspace{.1cm} \text{sgn}\:s^{\sigma}_j\cdot s^\sigma_\infty=-
\]

\end{theorem}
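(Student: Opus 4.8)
The plan is to treat this theorem as a direct translation of the cocircuit characterization in Lemma \ref{the_lemma} into determinant language, using the two bridging facts already assembled: the cocircuit--chirotope relations in \eqref{translation} and the sign identities $\text{sgn}\, s^\sigma_i = \chi(a^\sigma, e_i)$ for $i \in \sigma \cup \infty$ and $\text{sgn}\, s^\sigma_i = \chi(a^\sigma, h_i)$ for $i \not\in \sigma$. First I would invoke Lemma \ref{the_lemma}, which says that $\sigma$ supports a fixed point if and only if $C^\sigma(E_i) = +$ for every $i \in \sigma$ and $C^\sigma(H_j) = -$ for every $j \not\in \sigma$. The whole proof then reduces to rewriting these two cocircuit conditions in terms of the determinants $s^\sigma_i$ and $s^\sigma_\infty$.

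For the first condition, fix $i \in \sigma$ and substitute the first relation of \eqref{translation} to get $C^\sigma(E_i) = \chi(a^\sigma, e_i)\,\chi(a^\sigma, e_\infty)$. Since $i \in \sigma \subset \sigma \cup \infty$, the sign identity gives $\chi(a^\sigma, e_i) = \text{sgn}\, s^\sigma_i$, and because $\infty$ always lies in $\sigma \cup \infty$ we likewise have $\chi(a^\sigma, e_\infty) = \text{sgn}\, s^\sigma_\infty$. Using that the product of the signs of two real numbers equals the sign of their product, this yields $C^\sigma(E_i) = \text{sgn}(s^\sigma_i \cdot s^\sigma_\infty)$, so $C^\sigma(E_i) = +$ exactly when $\text{sgn}(s^\sigma_i \cdot s^\sigma_\infty) = +$. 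The second condition is symmetric: for $j \not\in \sigma$ the second relation of \eqref{translation} together with $\chi(a^\sigma, h_j) = \text{sgn}\, s^\sigma_j$ gives $C^\sigma(H_j) = \text{sgn}(s^\sigma_j \cdot s^\sigma_\infty)$, so $C^\sigma(H_j) = -$ exactly when $\text{sgn}(s^\sigma_j \cdot s^\sigma_\infty) = -$. Chaining these equivalences with Lemma \ref{the_lemma} closes both directions simultaneously.

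The argument is essentially bookkeeping, so the effort goes into making the index conventions airtight rather than into any hard estimate. The one point I would check carefully is the placement of $\infty$: the common factor $\chi(a^\sigma, e_\infty)$ appearing in both relations of \eqref{translation} must be matched to $s^\sigma_\infty$, which requires reading $\infty$ as an element of the index set $\sigma \cup \infty$ in the definition \eqref{s_def}, and is precisely why $s^\sigma_\infty$ is defined through $e_\infty$. I would also remark, for cleanliness, that no degeneracy arises: by the proof of Lemma \ref{the_lemma}, admissibility corresponds to the strict inequalities cutting out the cell $L^\sigma$, so whenever $\sigma$ is a support the relevant cocircuit entries are genuinely $\pm$ and never $0$; conversely, demanding $\text{sgn}(s^\sigma_i \cdot s^\sigma_\infty) = +$ or $-$ already forces both factors to be nonzero, so the translation never has to interpret a vanishing determinant. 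Beyond this bookkeeping I do not anticipate a substantive obstacle, since the nontrivial ingredient --- the equivalence of cocircuits and chirotopes encoded in \eqref{translation} --- is imported from \cite{OMbook}.
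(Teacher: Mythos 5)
Your proposal is correct and is precisely the paper's own argument: the paper proves Theorem \ref{theorem1} in one line by combining Lemma \ref{the_lemma} with the cocircuit--chirotope relations \eqref{translation}, which is exactly the translation you carry out, just spelled out in more detail (including the correct matching of the common factor $\chi(a^\sigma,e_\infty)$ to $s^\sigma_\infty$ via \eqref{s_def}). Your added remark on nondegeneracy is a harmless elaboration, not a deviation.
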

\begin{proof}
This follows directly from Lemma \ref{the_lemma} and \eqref{translation}.
\end{proof}
In the competitive case, it is straightforward to show that $\text{sgn}\:s^\sigma_\infty=\text{sgn}\:s^\sigma_i$ for some $i\in\sigma$ so that $\sigma$ supports a fixed point if and only if:
\begin{equation}\label{fp_condition}
    s^\sigma_i\cdot s^\sigma_j<0
\end{equation}
for all $i\in\sigma$ and $j\not\in\sigma$. Since we are considering only competitive networks here, we will use equation \eqref{fp_condition} whenever computing fixed point supports. 
\par From Theorem \ref{theorem1} and Lemma \ref{lemma2} we see that the quantities $s^{ij}_i=b_jW_{ij}+b_i$ arise from the larger set of determinants $s^\sigma_i$ which by Theorem \ref{theorem1} determine the fixed point supports of the network. Moreover, the quantities $\Delta^{ij}_k$ and $s^\sigma_i$ all arise from the chirotope $\chi$. The advantage of this perspective will come from the structure of $\chi$ as an oriented matroid which we will use in the next section. The following example illustrates Theorem \ref{theorem1} and the characterization of fixed point supports and their bifurcations in terms of determinants.

\begin{example}
Consider again the threshold-linear network in Example \ref{Example1} and whether the subset $\sigma=\{2,3\}$ supports a fixed point. By the above theorem this is equivalent to $\text{sgn }s^{23}_2=\text{sgn }s^{23}_3=-\text{sgn }s^{23}_1$. We can check this condition explicitly by computing:
\begin{align*}
s^{23}_2&=\text{det}(e_1,h_2,h_3,e_2)=\small\begin{vmatrix}
1 &        0     &    0      &   0\\
   -0.65 &  -1&  -0.57  &  0.40\\
   -1.34  & -1.45 &  -1  &  0.62\\
         0 &   1    &   0       &  0
\end{vmatrix}= 0.0466\\
s^{23}_3&=\text{det}(e_1,h_2,h_3,e_3)=\small\begin{vmatrix}
1 &        0     &    0      &   0\\
   -0.65 &  -1&  -0.57  &  0.40\\
   -1.34  & -1.45 &  -1  &  0.62\\
         0 &   0   &  1       &  0
\end{vmatrix}=0.04\\
s^{23}_1&=\text{det}(e_1,h_2,h_3,h_1)=\small\begin{vmatrix}
1 &        0     &    0      &   0\\
   -0.65 &  -1&  -0.57  &  0.40\\
   -1.34  & -1.45 &  -1  &  0.62\\
  -1&  -0.97&   -1.47&    0.49
\end{vmatrix}=-0.019
\end{align*}
Thus, we have $\text{sgn }s^{23}_2=\text{sgn }s^{23}_3=-\text{sgn }s^{23}_1$ and $\{2,3\}$ supports a fixed point i.e. $x^{23}$ is admissible. To understand how the support bifurcation $\{123,23\}\to \emptyset$ exhibited in Example \ref{Example1} arises from this perspective, note that the determinant $s^{23}_1=-s^{123}_1$ is involved in the admissibility of both $x^{23}$ and $x^{123}$. Now, if we view the determinants defining $\chi$ as polynomials in the network parameters, then they induce a cell decomposition such that $\chi$ is constant in each cell. For instance, the determinant $s^{123}_1=\text{det}(h_1,h_2,h_3,e_1)$ corresponds to the degree three polynomial:
\[
s^{123}_1=W_{23}W_{32}b_1 - W_{12}b_2 - W_{13}b_3 - W_{12}W_{23}b_3 - W_{13}W_{32}b_2 - b_1
\]
and the zero set $s^{123}_1=0$ defines a polynomial hypersurface in the space of threshold-linear networks partitioning it according to the sign of the polynomial function $s^{123}_1$.

\begin{figure}[h]
    \centering
    \includegraphics[width=5in]{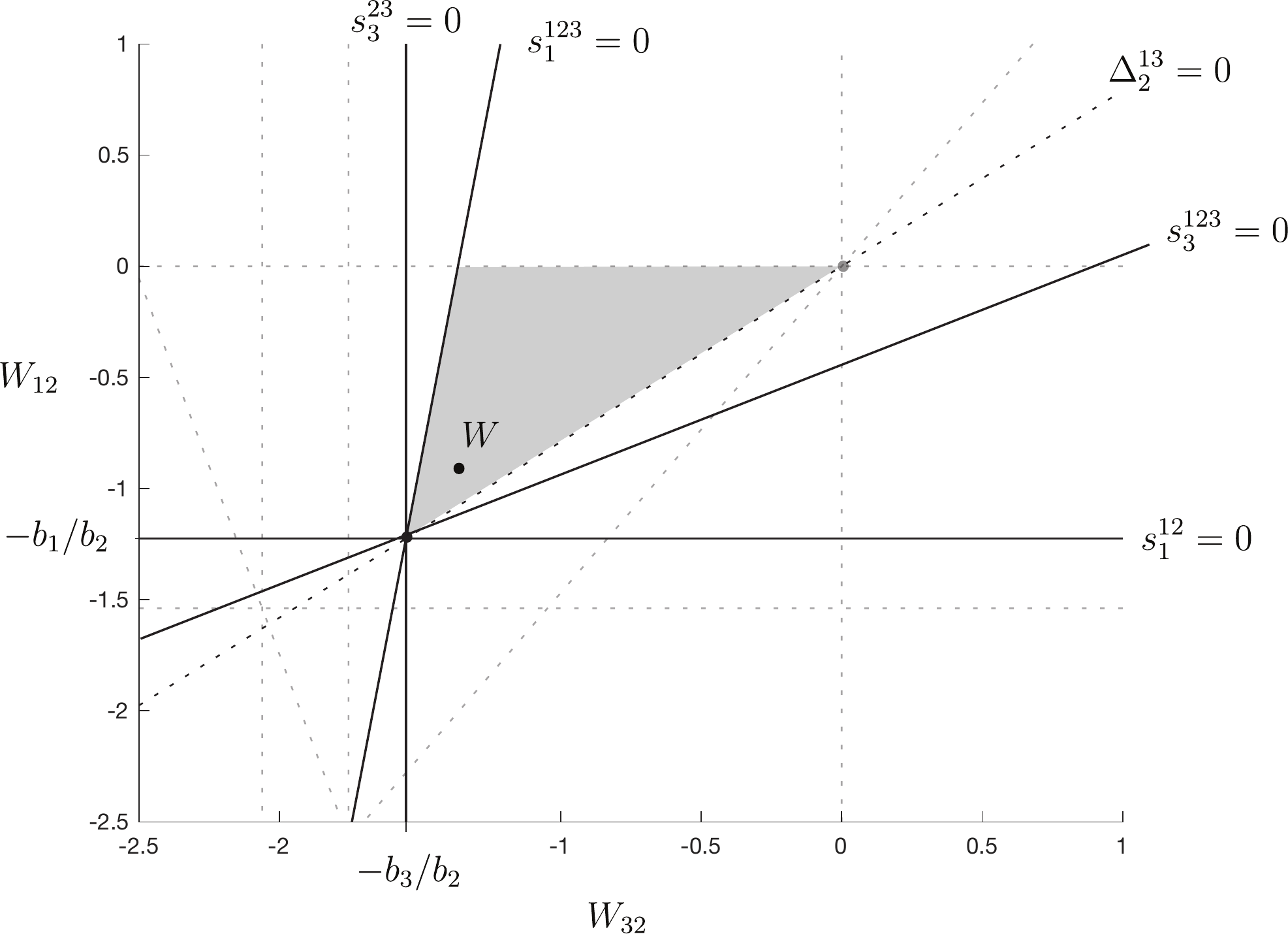}
    \caption{Two dimensional slice of the cell decomposition of the parameter space induced by $\chi$. The network in Example \ref{Example1} lies in the cell of this decomposition shaded gray.  The bifurcation in Figure~\ref{fig: A_3} corresponds to a wall of this cell and is obtained by varying the network parameters  across it. This can be viewed as varying the parameters orthogonally to the line $\Delta^{13}_2=0$ and separating the hyperplanes $H_1$ and $H_3$ along the $x_2$ axis. }
    \label{fig:slice}
\end{figure}
The support bifurcation $\{123,23\}\to \emptyset$ corresponds to varying the network parameters across this hypersurface. To make this more concrete, consider the two-dimensional slice of the parameter space obtained by fixing all parameters except $W_{12}$ and $W_{32}$. The cell decomposition restricted to this slice is an arrangement of lines in the plane (Figure~\ref{fig:slice}.) The lines $s^{12}_1=0$ and $s^{23}_3=0$ correspond to boundaries between directed graphs. The support bifurcation $\{23,123\}\to \emptyset$  in Example \ref{Example1} arises because $s^{123}_1=0$ defines a wall of the cell containing the network. The bifurcation is realized by varying the parameters across this wall, changing the sign of the determinant $s^{123}_1$. Geometrically, the sign of the determinant $s^{123}_1$ corresponds to the orientation of the configuration of vectors $\{h_1,h_2,h_3,e_1\}$ and varying the parameters across the hypersurface $s^{123}_1=0$ corresponds to flipping this orientation. Equivalently, the hyperplanes $\{H_1,H_2,H_3,E_1\}$ bound a simplicial cell in the arrangement and varying the parameters across $s^{123}_1=0$ pushes a face of this simplex across the opposite vertex and flips the simplex. To realize this bifurcation, in Example \ref{Example1} we varied the quantity $\Delta^{13}_2=b_3W_{12}-b_1W_{32}$ by increasing $W_{12}$ toward zero and decreasing $W_{32}$. Here, we view this is as moving orthogonally to the hypersurface $\Delta^{13}_2=0$ in the cell decomposition.
\end{example}

\begin{remark}
For competitive networks, the determinants $s^\sigma_\infty$ do not play a role in support bifurcations. However, in the non-competitive case, the hypersurfaces $s^\sigma_\infty=0$ give rise to support bifurcations of the form $\sigma\to\emptyset$. In such a bifurcation, the fixed point $x^\sigma$ goes off to infinity and disappears. If we view the arrangement $\mathcal{A}$ as an arrangement of spheres on $S^n$, then such a bifurcation can be viewed as the fixed point $x^\sigma$ crossing the equator $z=0$ and then reappearing as a virtual fixed point at the antipodal point on the equator.
\end{remark}

\section{Mutations of chirotopes and support bifurcations}
\par In this section we show how to obtain constraints on admissible sets of fixed point supports and their bifurcations by characterizing walls in the cell decomposition induced by $\chi$ as mutations of $\chi$. Roughly speaking, mutations in our context correspond to sign flips that result in another chirotope of a threshold-linear network.  Constraints on mutations are then obtained by considering the structure of determinant maps in general. Many of the results and definitions in this section are adapted from \cite{piano-movers} and \cite{Sturmfels-Mutations}. In particular, a similar approach was used in \cite{piano-movers} to obtain constraints in the Piano Mover's problem.
\par A chirotope $\chi$ is called \textit{simplicial} if $\chi:\mathcal{A}^{n+1}\to \{+,-\}$. In other words, $\chi(\lambda)\not=0$ for any $\lambda\in\mathcal{A}^{n+1}$. An ordered basis $\lambda\in\mathcal{A}^{n+1}$ such that $\chi(\lambda)=0$ corresponds to the affine hyperplanes corresponding to $\lambda$ intersecting at a point. The perturbations of such arrangements are not in one to one correspondence with simplicial cells. For instance, a perturbation of the degenerate arrangement in Figure~\ref{fig:degenerate} flips several simplices at once and leads to a higher order support bifurcation.
\begin{figure}[h]
    \centering
    \includegraphics[width=6in]{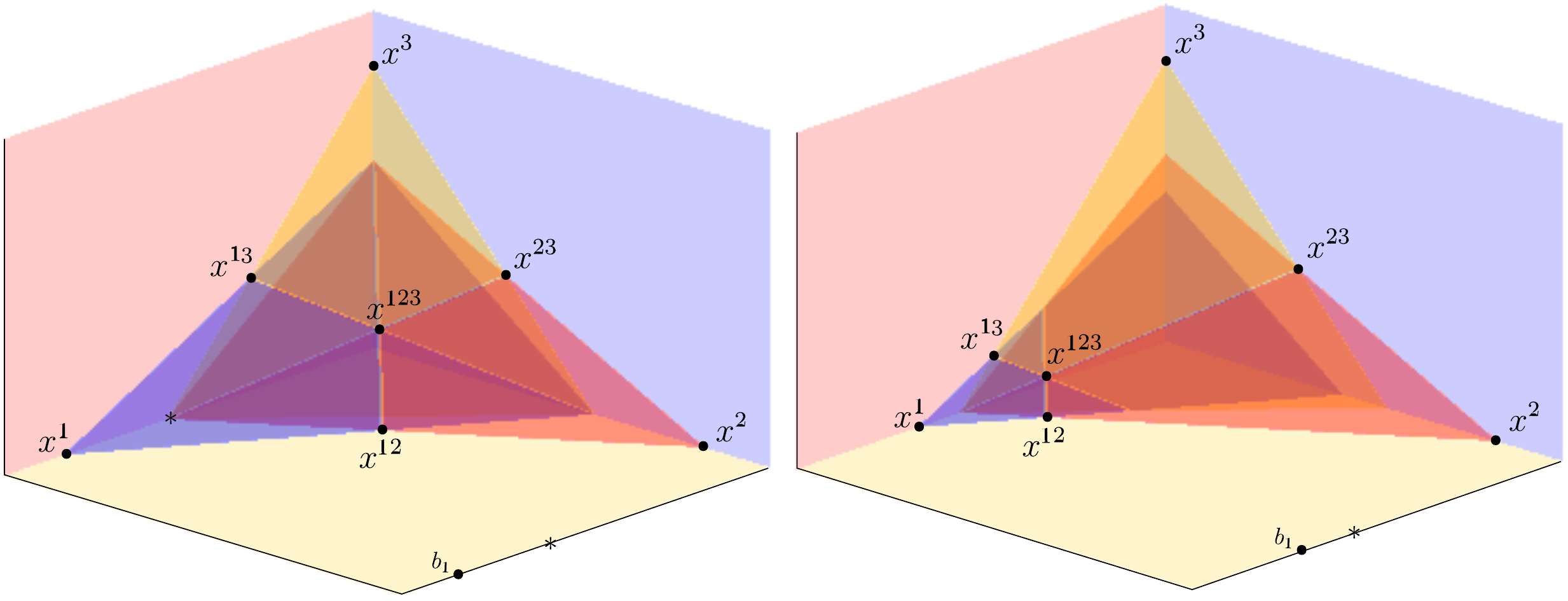}
    \caption{ The degeneracy in this arrangement leads to the generic support bifurcation $\{123,13,12,1\}\to \emptyset$ obtained by pushing the hyperplane $H_1$ over the degeneracy. The corresponding chirotope in this situation is not simplicial and the bifurcation does not correspond to a mutation.}
    \label{fig:degenerate}
\end{figure}

In terms of $\chi$, in such a scenario several determinants have become coupled so that flipping the sign of one determinant forces the signs of other determinants to be flipped as well. Such dependencies are captured by the Grassmann-Pl\"ucker relations. These are quadratic relations in the determinants arising from the Pl\"ucker embedding of the Grassmannian manifold into projective space via the determinant map. In the simplicial case, these relations can be reduced to the following \textit{three-term} Grassman-Pl\"ucker relations:

\begin{definition}\label{GP_def}
Given $\sigma\in\mathcal{A}^{n-1}$ and $\tau\in\mathcal{A}^4$ the relation:
\begin{align*}
    &\text{det}(\sigma,\tau_1,\tau_2)\cdot \text{det}(\sigma,\tau_3,\tau_4)- \\
    &\text{det}(\sigma,\tau_1,\tau_3)\cdot\text{det}(\sigma,\tau_2,\tau_4)+ \\
    &\text{det}(\sigma,\tau_1,\tau_4)\cdot\text{det}(\sigma,\tau_2,\tau_3)=0 
\end{align*}
is called a \textit{three-term Grassmann-Pl\"ucker relation}.
\end{definition}

In our case, a three-term Grassmann-Pl\"ucker relation is a polynomial equality in the parameters of the network and thus imposes strong constraints on the geometry of the cell decomposition of the space of threshold-linear networks induced by $\chi$.

\begin{example}
 Consider the three-term  Grassmann-Pl\"ucker relation defined by $\sigma=\{e_2,h_2\}$ and $\tau=\{e_1,e_3,h_1,h_3\}$:
\begin{align*}
    &\text{det}(e_2,h_2,e_1,e_3)\cdot \text{det}(e_2,h_2,h_1,h_3)- \\
    &\text{det}(e_2,h_2,e_1,h_1)\cdot\text{det}(e_2,h_2,e_3,h_3)+ \\
    &\text{det}(e_2,h_2,e_1,h_3)\cdot\text{det}(e_2,h_2,e_3,h_1)=0 
\end{align*}
From the matrix in \eqref{matrix} we see that this is the polynomial equality:
\begin{align*}
&-b_2( 
W_{13}W_{31}b_2 - W_{21}b_1 - W_{23}b_3 - W_{13}W_{21}b_3 - b_2 - W_{23}W_{31}b_1)\\
&-(W_{23}b_1 - W_{13}b_2)(W_{31}b_2 - W_{21}b_3)\\
&-(b_2 + W_{23}b_3)(b_2 + W_{21}b_1)=0
\end{align*}
Equivalently, this is can be written:
\[
 -s_2^2 s^{123}_1-\Delta^{21}_3\Delta^{32}_1-s^{23}_2 s^{12}_2=0
\]
In this way, the Grassmann-Pl\"ucker relations identify relationships between the parameters of the network that may not be immediately apparent. Note, if any two of the three terms in this relation have the same sign, then the sign of the third term is uniquely determined. Thus, these relations impose constraints on how the map $\chi$ can change as parameters are varied.
\end{example}

\begin{definition}
For any simplicial chirotope $\chi:\mathcal{A}^{n+1}\to\{+,-\}$, $\lambda\in\mathcal{A}^{n+1}$ is said to be a \textit{mutation} of $\chi$ if the alternating map $\chi'$ obtained from $\chi$ by flipping the sign $\chi(\lambda)$ satisfies the three-term Grassmann-Pl\"ucker relations.

\end{definition}
Geometrically, a mutation corresponds to the existence of a simplicial cell in the arrangement \cite{Sturmfels-Mutations}. In this way, the local changes to the arrangement are in one-to-one correspondence with the simplicial cells of the arrangement. Note however, if $\chi$ is the chirotope of a threshold-linear network and $\lambda\in\mathcal{A}^{n+1}$ is a mutation of $\chi$, the chirotope obtained by flipping the sign $\chi(\lambda)$ is not necessarily the chirotope of a threshold-linear network. Mutations that can be obtained by varying parameters in the space of threshold-linear networks we will call realizable. Realizable mutations correspond to walls in the cell decomposition of the parameter space induced by $\chi$. The following lemma gives the converse for dimension three.

\begin{lemma}\label{determinants_lemma}
In dimension three, a wall in the cell decomposition of the space of threshold-linear networks induced by $\chi$ corresponds to a mutation of $\chi$.
\end{lemma}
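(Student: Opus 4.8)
The plan is to show that crossing a generic wall flips exactly one value of the chirotope, and that such a single sign change is automatically a (realizable) mutation because the map on the far side of the wall is itself the chirotope of an honest threshold-linear network, hence a realizable oriented matroid.

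First I would set up the parameter space and its hypersurfaces. The chirotope $\chi$ is determined by the signs of the brackets $[\lambda]:=\det(\lambda)$ as $\lambda$ ranges over the $4$-element subsets of the ground configuration $\{e_1,e_2,e_3,h_1,h_2,h_3,e_\infty\}$; each $[\lambda]$ is a polynomial in the network parameters $W_{ij}$ and $b_i$, and the cell decomposition induced by $\chi$ is exactly the decomposition of the (competitive) parameter space by the zero sets $\{[\lambda]=0\}$ of the non-constant brackets. On every full-dimensional cell all brackets are nonzero, so $\chi$ is simplicial there; a wall is a relatively open codimension-one face separating two such cells, and the brackets that flip sign as we cross it are precisely those vanishing along the wall.

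The crux is to prove that, at a generic point of any wall, exactly one (unordered) bracket vanishes. This is where the restriction to $n=3$ enters: it suffices to show the non-constant bracket polynomials are pairwise coprime, for then the zero sets of any two distinct brackets meet in codimension $\ge 2$ and a generic point of one lies on no other. In dimension three this can be checked from the finite list of brackets. Up to constant brackets (the $\pm 1$ determinants that never vanish) and the identification $s^\sigma_i=-s^{\sigma\setminus i}_i$, the non-constant brackets are: the degree-one coordinate brackets $\pm b_i$ and $\pm W_{ij}$; the degree-two brackets $\Delta^{ij}_k=b_jW_{ik}-b_iW_{jk}$ and $1-W_{ij}W_{ji}$ (the determinants $s^\sigma_\infty$ with $|\sigma|=2$); and the degree-three brackets $s^{123}_l$ and $s^{123}_\infty$. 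Each $\Delta^{ij}_k$ and each $1-W_{ij}W_{ji}$ is irreducible, and each $s^{123}_l$ is linear in the external inputs $b_1,b_2,b_3$ with pairwise coprime $W$-coefficients (for $s^{123}_1$ these are $W_{23}W_{32}-1$, $-(W_{12}+W_{13}W_{32})$ and $-(W_{13}+W_{12}W_{23})$), hence irreducible as well; comparing which variables occur in each polynomial shows no two share a common factor. Thus the bracket hypersurfaces have no common components.

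Granting this, the conclusion is immediate. At a generic point $p$ of a wall only one bracket $[\lambda]$ vanishes, and it changes sign transversally as we cross $p$; hence the chirotopes $\chi$ and $\chi'$ of the threshold-linear networks on the two sides agree except that $\chi'(\lambda)=-\chi(\lambda)$. Since $\chi'$ is the chirotope of an actual vector configuration it is realizable and therefore satisfies the three-term Grassmann-Pl\"ucker relations; but $\chi'$ is exactly the alternating map obtained from $\chi$ by flipping the single sign $\chi(\lambda)$, so by definition $\lambda$ is a (realizable) mutation of $\chi$. I expect the main obstacle to be the coprimeness verification of the previous paragraph: in higher dimensions distinct brackets can share irreducible factors, so a wall may flip several signs at once and induce a higher-order bifurcation (as in the degenerate arrangement of Figure~\ref{fig:degenerate}), and the proof genuinely uses the low degree and explicit structure of the $n=3$ brackets to exclude this.
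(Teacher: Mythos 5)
Your proposal is correct and takes essentially the same route as the paper's proof: show no bracket polynomial vanishes identically and that the non-constant brackets are pairwise coprime (the paper proves irreducibility of the hardest case $s^{ijk}_i$ by a multidegree argument, you by linearity in the $b_i$ with coprime $W$-coefficients), so a generic wall flips exactly one sign, which is then a mutation since the chirotope on the far side is realized by an actual network. The only slip is that your bracket list omits the families $s^{ij}_j=b_j+b_iW_{ji}$ and $\det(h_i,h_j,e_j,e_\infty)=\pm(W_{jk}+W_{ik}W_{ji})$, which appear in the paper's full list of $35$ determinants; both are irreducible and coprime to the rest by the same style of check, so the argument is unaffected.
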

\begin{proof}
We would like to show that no determinant is identically zero and that no two determinants have a common factor. The first condition implies that a generic point in the parameter space corresponds to a simplicial chirotope. The second condition implies that the chirotopes of two adjacent cells differ by the sign of a single determinant. We will prove this by explicitly computing the determinants defining $\chi$ in dimension three. Let $p$ be the permutation parity of $(i,j,k)$ then up to relabeling of neurons, the ${{2\cdot 3+1}\choose{3+1}}=35$ determinants are:
\begin{align*}
    s^\emptyset_\infty&=1\\
    s^i_\infty&=-1\\
    \text{det}(e_i,h_i,e_k,e_\infty)&=(-1)^pW_{ji}\\
    s^i_i&=-b_i\\
    \Delta^{ij}_k&=(-1)^p (b_jW_{ik}-b_iW_{jk})\\
    s^{ij}_j&=b_j+b_iW_{ji}\\
    s^{ij}_\infty&= 1-W_{ij}W_{ji}\\
    \text{det}(h_i,h_j,e_j,e_\infty)&=(-1)^p( W_{jk}+W_{ik}W_{ji})\\
    s^{123}_\infty&=W_{12}W_{21} + W_{13}W_{31} + W_{23}W_{32}\\
    &+ W_{12}W_{23}W_{31} + W_{13}W_{21}W_{32} - 1\\\
    s^{ijk}_i & =- W_{ij}(b_kW_{jk}+b_j)-b_kW_{ik}-b_i\\
    &-W_{kj}(b_jW_{ik}-b_iW_{jk})
\end{align*}
We see that no determinant is identically zero and so the chirotope of a generic threshold-linear network is simplicial. It remains to show that no two determinants have a common factor. In fact, every determinant is distinct and irreducible. To see this, we will present a proof for $s^{ijk}_i$. Similar arguments apply to the remaining determinants. Let $p$ be the permutation parity of $(i,j,k)$ then we have:
\begin{align*}
s^{ijk}_i&=(-1)^p\text{det}(h_i,h_j,h_k,e_i)=-\begin{vmatrix}
1 & 0 & 0 & 0\\
0 & W_{ij} & W_{ik} & b_i\\
0 & -1 & W_{jk} & b_j\\
0 & W_{kj} & -1 & b_k
\end{vmatrix}\\
\\
&=-W_{ij}(b_kW_{jk}+b_j)-b_kW_{ik}-b_i\\
    &-W_{kj}(b_jW_{ik}-b_iW_{jk})
\end{align*}
By inspection we see that $s^{ijk}_i$ is degree one in each of the variables $W_{ij}$, $W_{ik}$ ,$b_i$, $W_{jk}$, $b_j$ ,$W_{kj}$ and $b_k$. Suppose this polynomial was reducible so that $s^{ijk}_i=f\cdot g$ for some polynomials $f$ and $g$.  It follows that for each variable, $\text{deg}(f)+\text{deg}(g)=1$. Consider the variable $W_{ij}$. Without loss of generality assume that $f$ is degree one in $W_{ij}$ and $g$ is degree zero. By the definition of determinant, $f$ must be degree one in the variables $W_{ik}$, $b_i$, and $W_{kj}$ since these variables lie in either row $i$ or row $j$. Continuing this reasoning, the polynomial $f$ must be degree one in every variable and $g$ must be degree zero. Thus, the factorization $s^{ijk}_i=f\cdot g$ is trivial and $s^{ijk}_i$ must be irreducible.
\end{proof}

 The lemma implies that, in dimension three, any generic support bifurcation of a threshold-linear network corresponds to a fixed point crossing a single linear boundary and is of the form $\{\sigma,\sigma\cup i\}\to \emptyset$ or $\{\sigma\}\to \sigma\cup i$. In other words, the scenario in Figure~\ref{fig:degenerate} does not arise generically in the space of three-dimensional threshold-linear networks. Moreover, since every support bifurcation corresponds to a mutation of $\chi$ it must preserve the three-term Grassmann-Pl\"ucker relations. We will use this fact to obtain constraints on support bifurcations of threshold-linear networks. To capture the constraints imposed by a graph $G$ on mutations and support bifurcations we make the following definition.
\begin{definition}
The dual graph of the cell decomposition induced by $\chi$ on the space of threshold-linear networks with graph $G$ we will call the \textit{mutation graph} of $G$. The dual graph of the cell decomposition induced by fixed point supports we will call the \textit{bifurcation graph of $G$}.
\end{definition}

This definition is motivated by \cite{Sturmfels-Mutations} and the mutation graph $\mathcal{G}^{k,n}_{real}$ of all chirotopes arising from arrangements of $k$ hyperplanes in $\mathbb{R}^n$. The following lemma is an analogue of a result from  \cite{Sturmfels-Mutations} stating that $\mathcal{G}^{k,r}_{real}$ is connected. Connectedness in our context implies that any two threshold-linear networks are related by a sequence of order one support bifurcations.

\begin{lemma}
The mutation and bifurcation graphs of a directed graph $G$ on three nodes are connected.
\end{lemma}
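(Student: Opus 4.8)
The plan is to reduce the statement to the path-connectedness of the underlying parameter region and then invoke a standard transversality argument. Throughout, fix a directed graph $G$ on $\{1,2,3\}$ and let $R_G\subset\mathbb{R}^9$ denote the set of competitive parameters $(W,b)$ with $W_{ij}<0$, $b_i>0$, $W_{ii}=0$ whose TLN graph is $G$; equivalently, $R_G$ is the open set cut out of $\{b_i>0,\,W_{ij}<0\}$ by the sign conditions $\operatorname{sgn}(b_iW_{ji}+b_j)$ prescribed by the edges of $G$. The mutation graph is the dual graph of the decomposition of $R_G$ into the open connected regions on which all $35$ determinants of Lemma~\ref{determinants_lemma} have constant sign, while the bifurcation graph is the dual graph of the coarser decomposition into connected regions on which $\operatorname{FP}(W,b)$ is constant.

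First I would show that $R_G$ is path-connected. The obstacle is that the graph conditions $b_iW_{ji}+b_j\gtrless 0$ are bilinear rather than linear, so $R_G$ need not be convex. To get around this, fix the slice $b=(1,1,1)$: there the condition for an edge $i\to j$ becomes simply $W_{ji}>-1$ and for a non-edge $W_{ji}<-1$, so the slice is the product, over the six off-diagonal entries, of the intervals $(-1,0)$ or $(-\infty,-1)$, which is nonempty and convex. It then suffices to connect an arbitrary $(W,b)\in R_G$ to this slice inside $R_G$. I would take the straight-line path $b(t)=(1-t)b+t(1,1,1)$, which stays in the positive orthant, and simultaneously move each $W_{ji}(t)$ within the interval $(-b_j(t)/b_i(t),\,0)$ if $i\to j$ in $G$, or $(-\infty,\,-b_j(t)/b_i(t))$ otherwise. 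Each such interval is a nonempty open interval whose endpoint $-b_j(t)/b_i(t)<0$ varies continuously in $t$, so a continuous selection $W_{ji}(t)$ exists and the resulting path lies in $R_G$ and ends in the slice, which is convex and hence connected. As every point of $R_G$ can be joined to this common slice, $R_G$ is path-connected.

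Next I would pass from path-connectedness of $R_G$ to connectedness of the mutation graph. By Lemma~\ref{determinants_lemma} the $35$ determinants are nonzero, irreducible, and pairwise without common factor, so the wall set $\{\,s^\sigma_i=0\,\}$ is a finite union of distinct hypersurfaces whose pairwise intersections, as well as their individual singular loci, have codimension at least two in the open $9$-manifold $R_G$. Given two cells $C,C'$, I would choose points $p\in C$, $p'\in C'$ and a path in $R_G$ joining them, then perturb it to a smooth generic path that avoids every codimension-$\geq 2$ stratum and meets each hypersurface transversally; such a perturbation exists by Sard's theorem together with the codimension bounds above. The perturbed path crosses the walls at finitely many points and therefore visits a finite sequence of cells, each consecutive pair sharing a single wall, i.e. differing by the sign of one determinant. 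This is precisely an edge-path from $C$ to $C'$ in the mutation graph, and each crossing is a mutation by Lemma~\ref{determinants_lemma}.

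Finally, connectedness of the bifurcation graph follows because its decomposition is a coarsening of the mutation decomposition. Indeed, by Theorem~\ref{theorem1} the set $\operatorname{FP}(W,b)$ is determined by the signs of the determinants and is therefore constant on each mutation cell, so every bifurcation cell is a union of mutation cells. Given two bifurcation cells, I would pick a mutation cell inside each and a connecting edge-path in the already-proven connected mutation graph; sending each mutation cell along the path to the bifurcation cell containing it, consecutive cells either coincide or share a wall that persists as a wall between the corresponding bifurcation cells, which yields an edge-path in the bifurcation graph. Hence both graphs are connected. I expect the path-connectedness step to be the main obstacle, since it is the only place the nonlinear graph constraints must be confronted directly; the remaining two steps are routine once the codimension estimates furnished by Lemma~\ref{determinants_lemma} are in hand.
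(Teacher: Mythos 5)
Your proposal is correct, and it reaches the conclusion by a route that is close in spirit to the paper's but genuinely more careful in one place. The paper's proof simply asserts that the space of TLNs with graph $G$ is a \emph{convex} open subset of $\mathbb{R}^{n^2-n}$ — a claim that is only valid in a fixed-$b$ slice, where the graph conditions $b_iW_{ji}+b_j\gtrless 0$ become linear half-space conditions in $W$ — and then argues that a generic straight line between two cells avoids the codimension-two intersections of the determinant hypersurfaces guaranteed by Lemma~\ref{determinants_lemma}, so that the line crosses one wall at a time. You instead work in the full nine-dimensional parameter space where $b$ varies, correctly observe that the bilinear constraints make $R_G$ non-convex, and repair this by an explicit deformation retraction to the convex slice $b=(1,1,1)$; you then replace the paper's generic-line argument by a Sard/transversality perturbation of an arbitrary connecting path. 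Both arguments consume exactly the same key input — the no-common-factor and irreducibility statements of Lemma~\ref{determinants_lemma}, which force all wall intersections (and, in your version, singular loci) into codimension at least two — and both dispose of the bifurcation graph the same way, as a contraction of the mutation graph, which you spell out via the fact that $\operatorname{FP}(W,b)$ is sign-determined by Theorem~\ref{theorem1} and hence constant on mutation cells. What your version buys is that it proves connectedness for the mutation graph as literally defined (on the full space of networks $(W,b)$ with graph $G$), filling the gap in the paper's convexity assertion; what the paper's version buys is brevity, since within a fixed-$b$ slice straight lines make the transversality step one sentence. One small point worth adding to your write-up: since $R_G$ is open and the original path is compact, a sufficiently small perturbation automatically stays inside $R_G$, so the perturbed generic path never violates the graph or competitiveness constraints.
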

\begin{proof}
It is sufficient to show that the mutation graph of $G$ is connected since the bifurcation graph is a contraction of the mutation graph. Since the space of threshold-linear networks with graph $G$ is  is a convex open subset of $\mathbb{R}^{n^2-n}$, there is a $n^2-n$ dimensional family of lines between a point in one cell and the points contained in another cell of the decomposition induced by $\chi$. We must show that there exists a line in this family that does not pass through any intersection of two determinants. By the lemma above any such intersection has codimension two and so the family of lines that pass through an intersection has codimension at least two. Thus the generic line between two cells will not pass through an intersection.
\end{proof}

\begin{figure}[h]
    \centering
    \includegraphics[scale=.7]{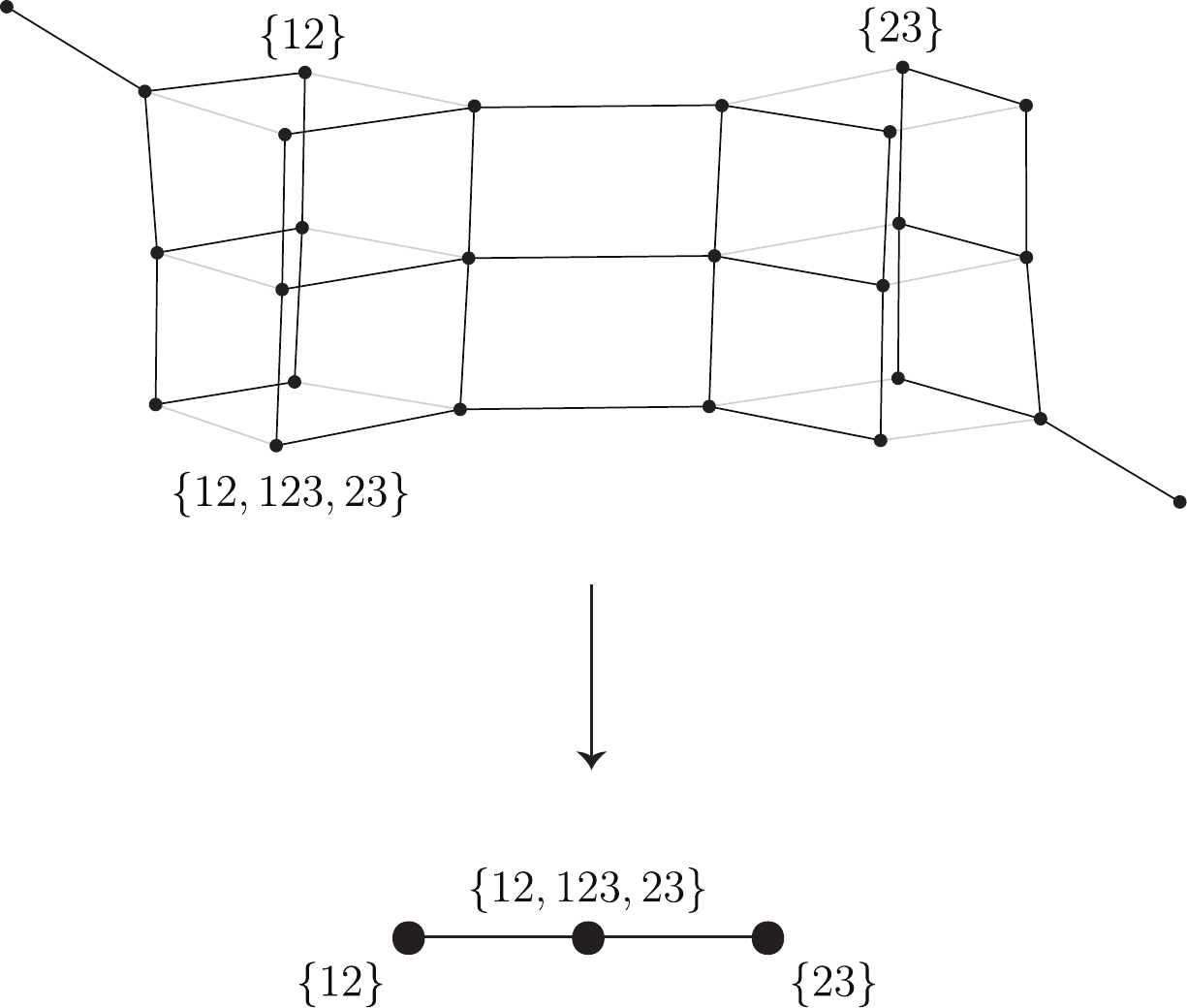}
    \caption{The mutation and bifurcation graph of the first graph in Figure~\ref{graph_11}. The bifurcation graph is obtained from the mutation graph by contracting mutations that do not correspond to support bifurcations.}
    \label{fig:mutation_graph}
\end{figure}

To end this section we outline a procedure for obtaining constraints on bifurcation graphs using a result of \cite{Sturmfels-Mutations} providing an efficient way to obtain constraints on the set of mutations of a chirotope. Let $\lambda=(\lambda_1,...,\lambda_{n+1})\in\mathcal{A}^{n+1}$ be an ordered basis and $\mu=(\mu_1,...,\mu_n)\in\mathcal{A}^{n}$ the complement of $\lambda$ in $\mathcal{A}$. For $i=1,...,n+1$ and $j=1,...,n$ define:
\[
\lambda[i\to j]:=(\lambda_1,...,\lambda_{i-1},\mu_j,\lambda_{i+1},...,\lambda_{n+1})
\]
The standard representative matrix of $\chi$ with respect to $\lambda$ is the $(n+1)\times n$ matrix:
\[
T[\lambda]:=\bigg( \chi(\lambda[i\to j])\bigg)
\]
From \cite{Sturmfels-Mutations} we have the following characterization of mutations in terms of this matrix.
\begin{theorem}\label{sturmfels}(Roudneff-Sturmfels, 1988)
$\lambda\in\mathcal{A}^{n+1}$ is a mutation if and only if $T[\lambda]$ has rank one.
\end{theorem}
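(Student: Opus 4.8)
Since this statement is due to Roudneff and Sturmfels, the plan is to reproduce their combinatorial argument \cite{Sturmfels-Mutations} in the present notation. By the definition of a mutation, $\lambda$ is a mutation exactly when the map $\chi'$ obtained from $\chi$ by reversing the single value $\chi(\lambda)$ still satisfies every three-term Grassmann--Pl\"ucker relation of Definition~\ref{GP_def}. Because $\chi$ already satisfies all of these relations and $\chi'$ agrees with $\chi$ on every ordered basis other than $\lambda$, only those relations in which $\lambda$ actually appears can change their status under the flip. The first step, then, is to enumerate precisely which three-term relations contain $\lambda$, and I expect this to single out a small, highly structured family.

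A three-term relation is indexed by $\sigma\in\mathcal{A}^{n-1}$ together with a four-element subset $\tau\subset\mathcal{A}$ disjoint from $\sigma$. For the ordered basis $\lambda$ to occur as one of the three bases $(\sigma,\tau_a,\tau_b)$, the tuple $\sigma$ must consist of $n-1$ of the entries of $\lambda$; hence $\tau$ is forced to be the two deleted entries $\lambda_i,\lambda_{i'}$ together with two elements $\mu_j,\mu_{j'}$ of the complement $\mu$. A direct check then shows that in the relation determined by $\sigma=\lambda\setminus\{\lambda_i,\lambda_{i'}\}$ and $\tau=\{\lambda_i,\lambda_{i'},\mu_j,\mu_{j'}\}$, the value $\chi(\lambda)$ appears in exactly one of the three terms, paired there with the \emph{double-swap} value $\chi(\sigma,\mu_j,\mu_{j'})$; the other two terms are products of \emph{single-swap} values $\chi(\sigma,\lambda_\bullet,\mu_\bullet)$, each of which is, up to a fixed reordering sign, one of the entries $\chi(\lambda[i\to j])$ of $T[\lambda]$.

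The second step is to apply the Grassmann--Pl\"ucker sign condition to this relation both before and after the flip. Reversing $\chi(\lambda)$ changes the sign of the one term containing it and leaves the two single-swap terms untouched. Unwinding the sign condition, one finds that the flip preserves this relation precisely when the two single-swap terms carry opposite signs, which after cancelling the reordering signs becomes
\[
\chi(\lambda[i\to j])\,\chi(\lambda[i'\to j']) \;=\; \chi(\lambda[i\to j'])\,\chi(\lambda[i'\to j]),
\]
i.e.\ the vanishing of the $2\times2$ minor of $T[\lambda]$ on rows $i,i'$ and columns $j,j'$. Letting $i,i',j,j'$ range over all choices sweeps out every relation containing $\lambda$, and hence every $2\times2$ minor, so $\chi'$ satisfies every three-term Grassmann--Pl\"ucker relation if and only if all $2\times2$ minors of $T[\lambda]$ vanish. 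Since each column of $T[\lambda]$ records the signs of the coordinates of some $\mu_j$ in the basis $\lambda$ and is therefore nonzero, this is exactly the statement that $T[\lambda]$ has rank one.

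The main obstacle is the sign bookkeeping in the second step: one must check that the four reordering signs relating the values $\chi(\sigma,\lambda_\bullet,\mu_\bullet)$ to the entries of $T[\lambda]$ multiply to $+1$, so that the ``opposite signs'' condition collapses to the clean minor identity above rather than to a sign-twisted variant. This does hold, since the four reorderings occur in two equal pairs, but it must be verified explicitly. A secondary point is the treatment of vanishing single-swap values and of the double-swap term $\chi(\sigma,\mu_j,\mu_{j'})$: one has to confirm that when a single-swap entry vanishes the associated minor condition degenerates consistently, and that in the doubly degenerate case the double-swap value vanishes as well, so that the term containing $\chi(\lambda)$ is itself zero and no spurious violation of the sign condition is introduced.
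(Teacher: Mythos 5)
The paper offers no proof of Theorem~\ref{sturmfels}: it is quoted as a known result of Roudneff and Sturmfels and attributed to \cite{Sturmfels-Mutations}, so there is no internal argument to compare against. Judged on its own merits, your reconstruction is correct and is essentially the standard argument. The localization step is sound: since $\chi'$ differs from $\chi$ only at $\lambda$, the only relations of Definition~\ref{GP_def} at stake are those with $\sigma=\lambda\setminus\{\lambda_i,\lambda_{i'}\}$ and $\tau=\{\lambda_i,\lambda_{i'},\mu_j,\mu_{j'}\}$; relations with repeated entries hold trivially because every term containing $\chi(\lambda)$ is then multiplied by a zero value. In each essential relation exactly one term is $\pm\chi(\lambda)\cdot\chi(\sigma,\mu_j,\mu_{j'})$, and since $\chi$ is simplicial all three terms are nonzero, so requiring the sign condition for both $\chi$ and its flip forces the two single-swap terms to agree, which is precisely your $2\times 2$ minor identity. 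Your flagged sign bookkeeping does check out: one has $\chi(\sigma,\lambda_i,\mu_j)=\epsilon_1\,\chi(\lambda[i'\to j])$ and $\chi(\sigma,\lambda_{i'},\mu_j)=\epsilon_2\,\chi(\lambda[i\to j])$ with $\epsilon_1,\epsilon_2$ depending only on the pair $(i,i')$ and not on the column index, so both products in the relation carry the common factor $\epsilon_1\epsilon_2$ and the minor condition appears untwisted. Finally, every entry of $T[\lambda]$ is $\pm 1$ in the simplicial case, and a nowhere-zero sign matrix has rank one exactly when all $2\times 2$ minors vanish, completing both directions at once.

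Two small remarks. Your closing worries about vanishing single- and double-swap values are moot: the paper defines mutations only for simplicial chirotopes, so no such value can vanish, and the column-nonvanishing argument via coordinates of $\mu_j$ is likewise unnecessary abstractly (it is the Cramer's-rule picture valid in the realizable case). Second, you tacitly read ``$\chi'$ satisfies the three-term Grassmann--Pl\"ucker relations'' as the sign-consistency condition (among the three signed terms both signs occur, or all vanish); that is the only sensible reading for an abstract sign map, since Definition~\ref{GP_def} as literally written is a polynomial identity in determinants, but it is worth making explicit. With that reading fixed, your argument is a complete and faithful proof of the theorem as the paper states it.
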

This result implies strong constraints on mutation graphs. In our present case, we wish to impose further constraints corresponding to our choice to study competitive threshold-linear networks with positive external drive and fixed graph $G$. From the proof of Lemma \ref{determinants_lemma} we see  that the quantities $b_i$, $W_{ij}$ and $b_iW_{ji}+b_i$ all arise from the determinant map $\chi$. Thus, our restriction to competitive threshold-linear networks with positive external input and fixed directed graph can be viewed as a set of constraints on the set of mutations of the chirotope. This suggests the following steps for deriving a graph $B$ that contains the mutation graph of $G$ as a subgraph.
\begin{enumerate}
    \item Choose an arbitrary competitive threshold-linear network with positive external input and graph $G$  and compute its chirotope $\chi_0$. Set $B=\{\chi_0\}$.
    \item Compute the set of mutations $\text{Mut}(\chi_0)$ using Theorem \ref{sturmfels}
    \item Compute the neighbors of $\chi_0$ by flipping signs in $\chi_0$ corresponding to the set $\text{Mut}(\chi_0)\backslash \text{Constraints}$ and append them to $B=\{\chi_0\}$.
    
\end{enumerate}

This procedure produces a connected graph $B$ with the mutation graph of $G$ as a connected subgraph. Moreover, the mutation graph is a proper subgraph of $B$ if and only if there exist mutations that are not realizable. By contracting $B$ along edges that do not change the fixed point supports we obtain a graph that contains the bifurcation graph of $G$ as a subgraph (Figure~\ref{fig:mutation_graph}). In the next section we will use this procedure to obtain the bifurcation graph of every directed graph in dimension three.

\section{Dimension three bifurcation graphs }

In this section we exhibit every admissible collection of fixed point supports and every support bifurcation of a three-dimensional competitive threshold-linear network with positive external input. We use the procedure outlined in the previous section to construct bifurcation graphs for each of the sixteen non-isomorphic directed graphs on three nodes and prove all bifurcations obtained this way are in fact realizable. Moreover, we show how the support bifurcations constrained by a graph $G$ can be realized by modulation of the quantities $\Delta^{ij}_k=b_jW_{ik}-b_iW_{jk}$.

\begin{figure}[h]
    \centering
    \includegraphics[width=6in]{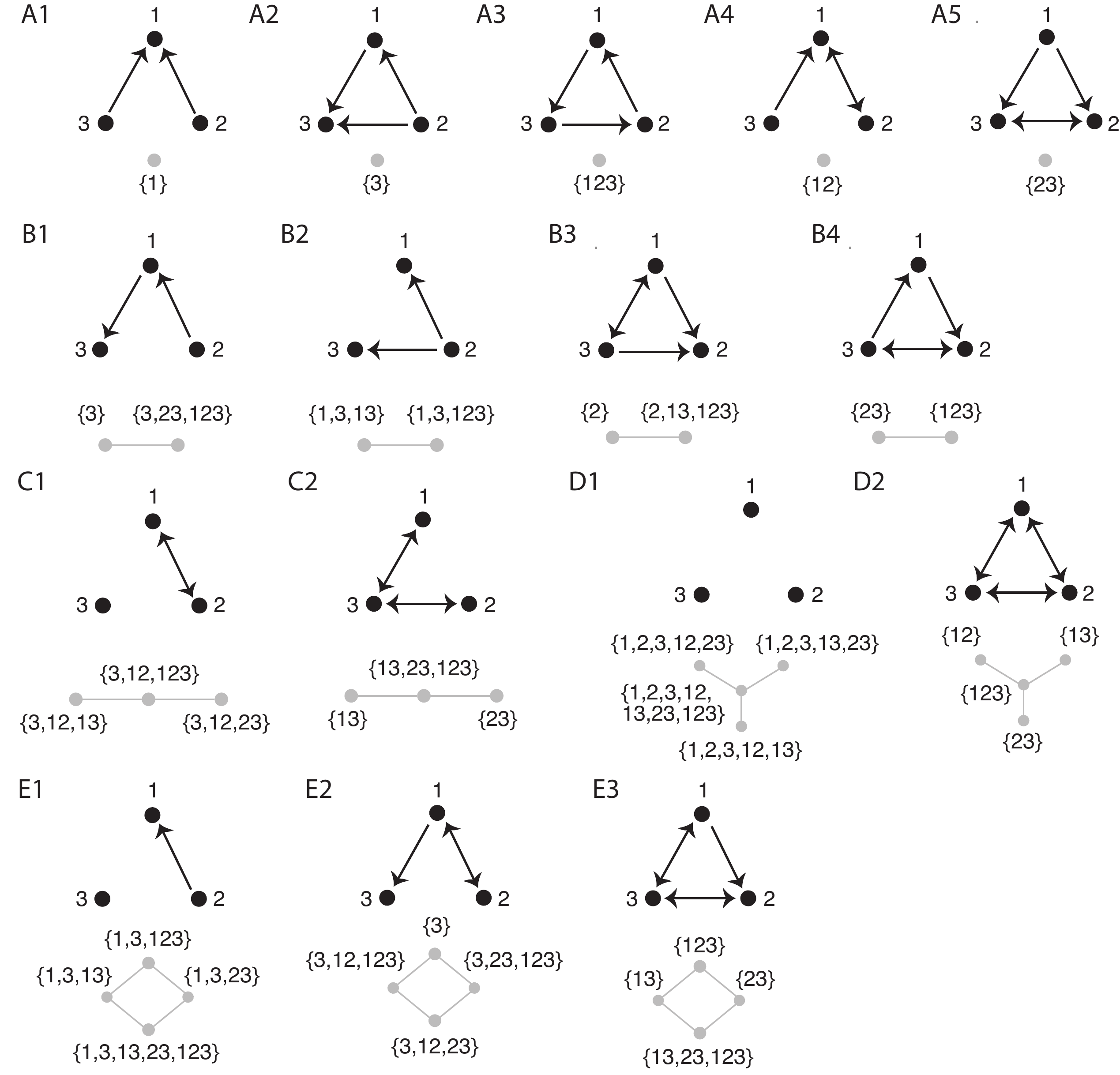}
    \caption{Fixed point supports and bifurcations for competitive threshold-linear networks with positive external drive partitioned according to the sixteen non-isomorphic directed graphs on three nodes. }
    \label{fig:bifs}
\end{figure}

We begin by showing explicitly how the quantities $\Delta^{ij}_k$ relate to the chirotope.

\begin{lemma}\label{separation_determinant}
Let $(i,j,k)$ be an ordered triple of neurons and $p$ the permutation parity of this ordered triple. Then we have:
\[
\Delta^{ij}_k:=(-1)^p\text{det}(e_i,e_j,h_i,h_j)
\]
\end{lemma}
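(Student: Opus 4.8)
The plan is to read the four relevant vectors directly off the matrix $\mathcal{A}$ in \eqref{matrix} and evaluate the $4\times 4$ determinant $\text{det}(e_i,e_j,h_i,h_j)$ by a generalized Laplace expansion along the two rows $e_i,e_j$. Recall that $e_i$ is supported only in the column indexed by $i$ (with a $1$ there), while $h_i$ carries a $-1$ in column $i$, the weight $W_{il}$ in each column $l\in\{1,2,3\}\setminus\{i\}$, and the input $b_i$ in the homogenizing column $e_\infty$. Because $e_i,e_j$ are supported on columns $i,j$ respectively, the Laplace expansion along the row set $\{e_i,e_j\}$ has exactly one nonzero term: the one pairing the column set $\{i,j\}$ for the top block with its complement $\{k,\infty\}$ for the bottom block. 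This immediately collapses the $4\times 4$ determinant to a product of two $2\times 2$ minors times a Laplace sign.

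First I would record the two surviving minors. The top minor, on rows $e_i,e_j$ and columns $i,j$, is a (possibly permuted) identity block and equals $\text{sgn}(j-i)$, i.e.\ $+1$ if $i<j$ and $-1$ if $i>j$. The bottom minor, on rows $h_i,h_j$ and the remaining columns $k,\infty$, is
\[
\text{det}\begin{pmatrix} W_{ik} & b_i \\ W_{jk} & b_j \end{pmatrix} = b_jW_{ik}-b_iW_{jk} = \Delta^{ij}_k,
\]
where I use that $(h_i)_k=W_{ik}$ and $(h_i)_\infty=b_i$, and that column $\infty$ always follows column $k$ in the natural ordering. Hence $\text{det}(e_i,e_j,h_i,h_j)$ equals a sign times $\Delta^{ij}_k$, and the entire content of the lemma is to verify that this sign is exactly $(-1)^p$.

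The sign bookkeeping is the only delicate step, and it is where I expect the real work to be. With columns ordered $1,2,3,\infty$, the Laplace expansion along the first two rows contributes the factor $(-1)^{(1+2)+(i+j)}=(-1)^{\,i+j+1}$ for the column set $\{i,j\}$, which combines with the top-minor sign $(-1)^{[i>j]}$ (writing $[i>j]$ for the indicator) to give the overall prefactor $(-1)^{\,i+j+1+[i>j]}$. It then remains to check
\[
(-1)^{\,i+j+1+[i>j]} = (-1)^p.
\]
I would prove this by writing $p$ through the inversion count $p\equiv [i>j]+[i>k]+[j>k]\pmod 2$, which reduces the claim to $(-1)^{\,i+j+1}=(-1)^{[i>k]+[j>k]}$. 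Since $\{i,j,k\}=\{1,2,3\}$ forces $i+j=6-k$, both sides equal $(-1)^{k+1}$: indeed $i+j+1\equiv k+1$, and a one-line case check over $k\in\{1,2,3\}$ shows $[i>k]+[j>k]+k$ is always odd, so $[i>k]+[j>k]\equiv k+1$ as well. This closes the identity, yielding $\text{det}(e_i,e_j,h_i,h_j)=(-1)^p\Delta^{ij}_k$ and hence $\Delta^{ij}_k=(-1)^p\,\text{det}(e_i,e_j,h_i,h_j)$. As a normalization check I would evaluate the base case $(i,j,k)=(1,2,3)$ directly, where both sides equal $b_2W_{13}-b_1W_{23}$.
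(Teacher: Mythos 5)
Your proof is correct and is essentially the paper's computation: both reduce $\det(e_i,e_j,h_i,h_j)$ to the $2\times 2$ minor $b_jW_{ik}-b_iW_{jk}$ occupying the columns $\{k,\infty\}$ of the rows $h_i,h_j$, after using the identity rows $e_i,e_j$ to kill everything else. The only difference is sign bookkeeping: the paper extracts $(-1)^p$ in one stroke by permuting the columns to the order $(i,j,k,\infty)$ and then row reducing to a block-triangular form, whereas you recover the same sign via the Laplace prefactor $(-1)^{i+j+1+[i>j]}$ together with a (correctly verified) inversion-count identity --- equivalent, just slightly more laborious.
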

\begin{proof}
Let $p$ be the permutation parity of $(i,j,k)$. By permuting columns and row reducing we have:
\begin{align*}
\text{det}(e_i,e_j,h_i,h_j)=(-1)^p\cdot\small\begin{vmatrix}
1 & 0 & 0 & 0\\
0 &1 & 0 & 0\\
-1 & W_{ij} & W_{ik} & b_i\\
W_{ji} & -1 & W_{jk} & b_j
\end{vmatrix}&=(-1)^p\cdot\small\begin{vmatrix}
1 & 0 & 0 & 0\\
0 &1 & 0 & 0\\
0 & 0 & W_{ik} & b_i\\
0 & 0 & W_{jk} & b_j
\end{vmatrix}\\
\\
&=(-1)^p(b_jW_{ik}-b_iW_{jk})
\end{align*}
\end{proof}
Using this lemma and the Grassmann-Pl\"ucker relations we can also relate the signs of the quantities $\Delta^{ij}_k$ with the directed graph determined by the signs of the quantities $s^{ij}_j$.
\begin{lemma}\label{graph_sep_lemma}
If $k\to i$ but $k\not\to j$ in $G$, then we have $\text{sgn}\:\Delta^{ij}_k=+$. Similarly if $k\to j$ but $k\not\to i$ then we have $\text{sgn}\:\Delta^{ij}_k=-$.
\end{lemma}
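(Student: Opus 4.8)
The plan is to prove the statement directly from the definitions, reducing it to the positivity of the external inputs together with the two weight inequalities that encode the hypothesized edges. The Grassmann--Pl\"ucker relations and Lemma~\ref{separation_determinant}, though available, are not actually needed here: the claim is really an elementary inequality.

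First I would translate the graph hypotheses into inequalities on the weights. By the definition of the graph of the TLN, the edge $k\to i$ is equivalent to $s^{ki}_i=b_kW_{ik}+b_i>0$, while the absence of the edge $k\to j$ is equivalent to $s^{kj}_j=b_kW_{jk}+b_j\le 0$. Since $b_k>0$, these read
\[
W_{ik}>-\frac{b_i}{b_k}\qquad\text{and}\qquad W_{jk}\le -\frac{b_j}{b_k}.
\]

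Next I would substitute into the defining expression $\Delta^{ij}_k=b_jW_{ik}-b_iW_{jk}$. Multiplying the first inequality by $b_j>0$ gives $b_jW_{ik}>-b_ib_j/b_k$; multiplying the second by $-b_i<0$ reverses it and gives $-b_iW_{jk}\ge b_ib_j/b_k$. Adding the two yields $\Delta^{ij}_k>-b_ib_j/b_k+b_ib_j/b_k=0$, so $\text{sgn}\,\Delta^{ij}_k=+$. I would point out that the strict inequality is inherited entirely from the present edge $k\to i$, so the conclusion survives even in the degenerate case $s^{kj}_j=0$. The second statement then follows from the antisymmetry $\Delta^{ji}_k=-\Delta^{ij}_k$: applying the first part with $i$ and $j$ interchanged shows that $k\to j$ and $k\not\to i$ force $\Delta^{ji}_k>0$, i.e. $\Delta^{ij}_k<0$.

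I expect no serious obstacle; the only care needed is the sign bookkeeping when clearing the positive denominator $b_k$ and when multiplying through by the negative factor $-b_i$. A genuine difficulty would appear only if one insisted on arguing inside the oriented-matroid framework via a three-term Grassmann--Pl\"ucker relation together with $\Delta^{ij}_k=(-1)^p\det(e_i,e_j,h_i,h_j)$: there the work shifts to choosing a relation whose two companion products have already-determined signs, so that the relation pins down the sign of the $\Delta$-term, and this in turn forces one to invoke the competitiveness hypothesis $W_{ij}<0$ to fix those companion signs. That route is strictly more involved than the elementary computation above, which is why I would favor the direct argument.
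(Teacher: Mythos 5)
Your proof is correct, and it takes a genuinely more elementary route than the paper. The paper proves this lemma inside its oriented-matroid framework: it writes down the three-term Grassmann--Pl\"ucker relation for $\sigma=\{e_i,e_j\}$ and $\tau=\{e_k,h_i,h_j,h_k\}$, simplifies it using Lemma~\ref{separation_determinant} and the determinant evaluations $s^i_i=-b_i$, and reads off the sign of $\Delta^{ij}_k$ from the signs of the two companion products. After substitution, that relation is exactly the linear identity $b_k\,\Delta^{ij}_k = b_j\,s^{ik}_i - b_i\,s^{jk}_j$, which is also what your direct computation amounts to once you clear the denominator $b_k$ --- so the two arguments rest on the same identity, reached by different means. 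Your version buys brevity, no matroid machinery, and a cleaner treatment of the boundary case $s^{jk}_j=0$ (the paper's phrasing ``$-s^i_i s^{jk}_j<0$'' tacitly assumes the non-edge inequality is strict, whereas you correctly note that strictness is inherited from the present edge alone); the paper's version buys thematic coherence, since the same template --- use a GP relation to couple graph quantities $s^{ij}_j$ to separation quantities $\Delta^{ij}_k$ and thereby pin down or unlock a sign --- is precisely the engine of Theorem~\ref{theorem2}, so proving the lemma this way previews the main technique. One small correction to your closing remark: the paper's GP route does \emph{not} need the competitiveness hypothesis $W_{ij}<0$ here; the companion signs $-s^i_i s^{jk}_j<0$ and $s^j_j s^{ik}_i<0$ follow from $b_i,b_j>0$ together with the edge/non-edge hypotheses alone (competitiveness enters elsewhere, e.g.\ in the lemma locating $p_{ij}$ relative to $H_k$, where one divides by $W_{ij}$), so the matroid route is longer but not hypothesis-hungrier than yours.
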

\begin{proof}
Consider the Grassmann-Pl\"ucker relation corresponding to $\sigma=\{e_i,e_j\}$ and $\tau=\{e_k,h_i,h_j,h_k\}$:
\begin{align*}
    &\text{det}(e_i,e_j,e_k,h_i)\cdot \text{det}(e_i,e_j,h_j,h_k)- \\
    &\text{det}(e_i,e_j,e_k,h_j)\cdot\text{det}(e_i,e_j,h_i,h_k)+ \\
    &\text{det}(e_i,e_j,e_k,h_k)\cdot\text{det}(e_i,e_j,h_i,h_j)=0 \\
\end{align*}
By transposing vectors and using the alternating property of the determinant we can rewrite this relation:
\begin{align*}
    -&\text{det}(h_i,e_j,e_k,e_i)\cdot \text{det}(e_i,h_j,h_k,e_j)- \\
    -&\text{det}(e_i,h_j,e_k,e_j)\cdot\text{det}(h_i,e_j,h_k,e_i)+ \\
    -&\text{det}(e_i,e_j,h_k,e_k)\cdot\text{det}(e_i,e_j,h_i,h_j)=0 
\end{align*}
Now, if we let $p$ be the permutation parity of $(i,j,k)$, then by definition as well as the previous lemma, this relation becomes:
\[
-(-1)^p s^i_i (-1)^p s^{jk}_j+(-1)^p s^j_j(-1)^p s^{ik}_i-(-1)^p s^k_k (-1)^p\Delta^{ij}_k=0
\]
or more simply,
\[
- s^i_i  s^{jk}_j+ s^j_j s^{ik}_i- s^k_k \Delta^{ij}_k=0
\]
Now, suppose $k\to i$ but $k\not\to j$. Then together with the assumption of positive external input we have $-s^i_is^{jk}_j<0$ and $s^j_js^{ik}_i<0$ and the above relation implies $\Delta^{ij}_k>0$. Similarly, if $k\not\to i$ but $k\to j$, then we have $-s^i_is^{jk}_j>0$ and $s^j_js^{ik}_i>0$ and the above relation implies $\Delta^{ij}_k<0$.
\end{proof}

These two lemmas suggest the following definition:
\begin{definition}\label{sep_def}
We say that neuron $k$ separates node $i$ from node $j$ if $k\to i$ but $k\not\to j$ in $G$.
\end{definition}
Lemma \ref{graph_sep_lemma} says that when neuron $k$ separates neurons $i$ and $j$ in the graph $G$ that the sign of the separation quantity $\Delta^{ij}_k$ is fixed. Alternatively, lemma \ref{separation_determinant} says that when neuron $k$ separates neurons $i$ and $j$, the relative positions of the hyperplanes $H_i$ and $H_j$ along the $x_k$-axis remains fixed. More generally, the quantity $\Delta^{ij}_k$ can be viewed as modulating the separation of these hyperplanes along the $x_k$-axis. With the above lemmas and definition we have the following theorem which gives a graph theoretic condition for the existence of a support bifurcation in dimension three and moreover, confirms the existence of all bifurcations in Figure~\ref{fig:bifs}.
\begin{theorem}\label{theorem2}
In dimension three, a support bifurcation of the form $\{ijk,jk\}\to\emptyset$ or $\{ijk\}\to \{jk\}$ can occur  if and only if one of the following hold in $G$:
\begin{enumerate}
    \item $j\leftrightarrow k$ and $j$ or $k$ is non-separating.
    \item $j\not\leftrightarrow k$ and $j$ or $k$ is non-separating.
\end{enumerate}
\end{theorem}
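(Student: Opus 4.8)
The plan is to realize this bifurcation as the flip of a single determinant and then read off the graph condition from a Grassmann--Pl\"ucker relation. By Lemma \ref{determinants_lemma} every generic bifurcation in dimension three is the flip of one $s^\sigma_i$; here the two supports $ijk$ and $jk$ are coupled through the shared determinant $s^{ijk}_i=-s^{jk}_i$ (the relation $s^\sigma_i=-s^{\sigma\setminus i}_i$), so the bifurcation is exactly the mutation reversing $\operatorname{sgn} s^{ijk}_i$, i.e.\ the flip of the simplex $\{H_i,H_j,H_k,E_i\}$. First I would record when this flip is of the stated form and not $\{ijk\}\to\emptyset$: the flip changes $s^{jk}_i$ but not $s^{jk}_j,s^{jk}_k$, so $jk$ can be admissible on one side only if $\operatorname{sgn} s^{jk}_j=\operatorname{sgn} s^{jk}_k$. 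Computing as in Lemma \ref{lemma2} gives $s^{jk}_j=b_j+b_kW_{jk}$ and $s^{jk}_k=b_k+b_jW_{kj}$, whose signs are precisely the edges $k\to j$ and $j\to k$. Hence $s^{jk}_j=s^{jk}_k$ holds exactly when $j\leftrightarrow k$ (both positive) or $j,k$ are non-adjacent (both negative); this is the origin of the two cases in the statement, and in the remaining single-edge subcase of $j\not\leftrightarrow k$ the inequality $s^{jk}_j\neq s^{jk}_k$ already forbids $jk$ from being admissible.

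The engine of the proof is the identity
\[
b_i\,s^{ijk}_i \;=\; \Delta^{ij}_k\,\Delta^{ik}_j \;-\; s^{ij}_i\,s^{ik}_i,
\]
obtained by multiplying the expansion $s^{ijk}_i=-W_{ij}s^{jk}_j-s^{ik}_i-W_{kj}\Delta^{ij}_k$ by $b_i$ and eliminating $s^{jk}_j$ through the relation $b_is^{jk}_j=b_js^{ik}_i-b_k\Delta^{ij}_k$ established in the proof of Lemma \ref{graph_sep_lemma}. Using this relation and its $j\leftrightarrow k$ partner a second time converts it into the factored form
\[
b_jb_k\,s^{ijk}_i \;=\; b_i\,s^{jk}_j s^{jk}_k \;-\; b_j\,s^{ik}_i s^{jk}_k \;-\; b_k\,s^{ij}_i s^{jk}_j .
\]
For the ``only if'' direction I would argue by contrapositive. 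If both $j$ and $k$ are separating, Lemma \ref{graph_sep_lemma} fixes the signs of $\Delta^{ij}_k$ and $\Delta^{ik}_j$, while the edges fix those of $s^{ij}_i,s^{ik}_i,s^{jk}_j,s^{jk}_k$. Running the two cases $j\leftrightarrow k$ and $j,k$ non-adjacent through the factored identity, one checks that all three terms on the right acquire the same sign, so $s^{ijk}_i$ is sign-definite (positive) throughout the graph-$G$ region. A sign-definite determinant cannot be flipped without leaving $G$, so the wall $s^{ijk}_i=0$ never meets the region and no such bifurcation occurs.

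For the ``if'' direction, suppose say $k$ is non-separating, so Lemma \ref{graph_sep_lemma} places no constraint on $\operatorname{sgn}\Delta^{ij}_k$ and both signs occur within the graph-$G$ region. Reading the first identity as affine in $\Delta^{ij}_k$ (with $\Delta^{ik}_j\neq 0$ generically), $s^{ijk}_i$ takes both signs there, so the wall $s^{ijk}_i=0$ is crossed; by Lemma \ref{determinants_lemma} this is a realizable mutation, and since we are in case (1) or (2) we have $s^{jk}_j=s^{jk}_k$, so after arranging $s^{ijk}_j,s^{ijk}_k$ to agree the crossing toggles the admissibility of $ijk$ and produces one of the two stated forms. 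An explicit path is obtained by moving $\Delta^{ij}_k$ with the two-weight perturbation of Example \ref{Example1} (varying $W_{ik}$ and $W_{jk}$ in tandem) so that $G$ is preserved, invoking connectedness of the mutation graph to guarantee the crossing; the symmetric argument handles $j$ non-separating via $\Delta^{ik}_j$.

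The main obstacle is the sign bookkeeping behind the factored identity: the naive expansion $s^{ijk}_i=-W_{ij}s^{jk}_j-s^{ik}_i-W_{kj}\Delta^{ij}_k$ is sign-indefinite, and only after eliminating $\Delta^{ij}_k,\Delta^{ik}_j$ through the Grassmann--Pl\"ucker relations does $s^{ijk}_i$ become manifestly sign-definite in each case. Getting this identity with the correct permutation parities, and verifying that the weight perturbation used in the ``if'' direction moves the intended $\Delta$ across zero while flipping no other determinant (hence without altering $G$), are the two places I expect to need the most care.
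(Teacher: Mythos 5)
Your proposal follows the paper's skeleton — identify the bifurcation with the flip of the shared determinant $s^{ijk}_i=-s^{jk}_i$, dispose of the single-edge case between $j$ and $k$ via inadmissibility of $jk$, use Lemma \ref{graph_sep_lemma} to pin the signs of the $\Delta$'s, and realize the flip inside a $(W_{ik},W_{jk})$-slice — and your \emph{only-if} direction is actually tighter than the paper's. Your engine identity $b_i s^{ijk}_i=\Delta^{ij}_k\Delta^{ik}_j-s^{ij}_i s^{ik}_i$ is the correctly signed form of the paper's relation \eqref{GP_Relation} (which as printed drops the $b_i$ and mis-signs the $\Delta\Delta$ term), and you are right that with the correct signs the paper's argument "$\operatorname{sgn}(\Delta^{ij}_k\Delta^{ik}_j)=\operatorname{sgn}(s^{ik}_is^{ij}_i)$ determines $\operatorname{sgn} s^{ijk}_i$" does not close, since equal product-signs leave a difference indeterminate. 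Your factored identity $b_jb_k\,s^{ijk}_i=b_i s^{jk}_js^{jk}_k-b_j s^{ik}_is^{jk}_k-b_k s^{ij}_is^{jk}_j$ checks out, and in both relevant cases ($j\leftrightarrow k$ and $j,k$ non-adjacent, with $j,k$ both separating) all three right-hand terms do share a sign, making $s^{ijk}_i$ sign-definite on the graph-$G$ region. This is a genuine repair of the paper's only-if step.

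The \emph{if} direction, however, has a real gap. The step "reading the first identity as affine in $\Delta^{ij}_k$ (with $\Delta^{ik}_j\neq 0$), $s^{ijk}_i$ takes both signs" does not follow: within the graph-$G$ region you can only move $\Delta^{ij}_k$ through $W_{ik}$ and $W_{jk}$, and when $k\to i$ and $k\to j$ these weights are confined to the bounded box $(-b_i/b_k,0)\times(-b_j/b_k,0)$, so $\Delta^{ij}_k$ sweeps only a bounded interval while the "constant term" $s^{ij}_i s^{ik}_i$ co-varies through $s^{ik}_i$; a monotone affine function on a bounded interval need not cross zero. The paper supplies exactly the missing ingredient: the second Grassmann--Pl\"ucker relation \eqref{GP2} shows that the wall $s^{ijk}_i=0$ enters the $G$-quadrant of the slice if and only if condition \eqref{caveat}, $\operatorname{sgn}\bigl(s^{jk}_k\,\Delta^{ik}_j\bigr)=-$, holds; this is automatic when $j$ separates $i$ and $k$, and otherwise one must first use $j$'s non-separation to push $\Delta^{ik}_j$ across zero (a different slice) before the flip of $s^{ijk}_i$ becomes reachable. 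The paper's own worked example after the theorem (graph $D_2$, bifurcation $\{123\}\to\{12\}$) is a concrete counterexample to your shortcut: there $\Delta^{32}_1>0$ initially, \eqref{caveat} fails, and varying the separation $\Delta^{13}_2$ alone cannot flip $s^{123}_3$ until $\Delta^{32}_1$ has first changed sign. Your appeal to connectedness of the mutation graph cannot patch this, since connectedness does not certify that this particular wall bounds a cell of the $G$-region; your closing caveat flags the right worry, but the argument as written does not resolve it.
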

\begin{proof}
Such a bifurcation occurs if $\{j,k\}$ suppports a fixed point and $s^{ijk}_i$ arises as a realizable mutation. The first condition is equivalent to:  
\[
\text{sgn }\:s^{jk}_j=\text{sgn }\:s^{jk}_k=-\text{sgn }\:s^{jk}_i
\]
where the first equality is guaranteed if $j\leftrightarrow k$ or $j\not\leftrightarrow k$. Consider the three-term  Grassmann-Pl\"ucker relation defined by $\sigma=\{e_i,h_i\}$ and $\tau=\{e_j,e_k,h_j,h_k\}$. After simplification we have:

\begin{equation}\label{GP_Relation}
 s^{ijk}_i+\Delta^{ij}_k \Delta^{ik}_j+s^{ik}_i s^{ij}_i=0
\end{equation}

Suppose $k$ and $j$ are both separating, in such case we have the equalities $\text{sgn }\: \Delta^{ij}_k=\text{sgn } s^{ik}_i$ and $\text{sgn }\Delta^{ik}_j=\text{sgn }s^{ij}_i$ and it follows that:

\begin{align*}
\text{sgn }\: \Delta^{ij}_k\Delta^{ik}_j=\text{sgn }\:s^{ik}_i s^{ij}_i
\end{align*}

Thus, it follows that $\text{sgn }\:s^{ijk}_i$ is determined by the relation \eqref{GP_Relation} and the sign of $s^{ijk}_i$ cannot be flipped by varying network parameters. We now show that in the absence of such a graph structure, the sign of the determinant $s^{ijk}_i$ can flipped. Suppose that $j$ and $k$ are not both separating. Without loss of generality  suppose $k$ is not separating. Consider the family of two-dimensional slices of the parameter space obtained by fixing all parameters except $W_{ik}$ and $W_{jk}$. The determinants $s^{ik}_i=b_kW_{ik}+b_i$, $s^{jk}_j=b_kW_{jk}+b_j$ and $\Delta^{ij}_k=b_jW_{ik}-b_iW_{jk}$ all intersect at $(-b_i/b_k,-b_j/b_k)$ in this slice and the intersection of those threshold-linear networks with non-separating $k$ with this slice corresponds to the positive or negative quadrant.

\begin{figure}[h]
    \centering
    \includegraphics[width=4.5in]{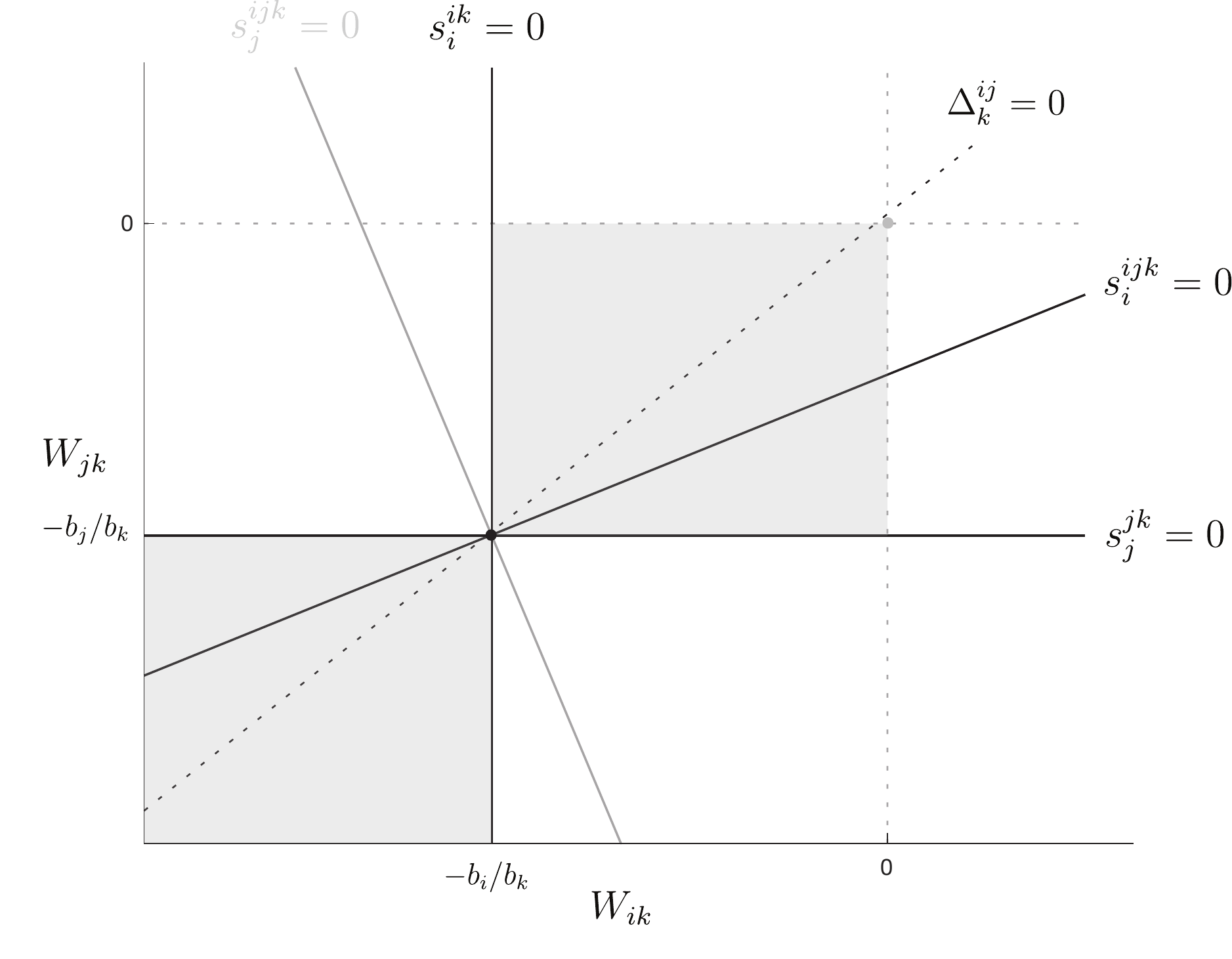}
    \caption{The sign of the determinant $s^{ijk}_i$ can be flipped by varying network parameters if the line $s^{ijk}_i=0$ intersects the quadrant defined by $G$, shaded in gray. In such a case, increasing the separation $\Delta^{ij}_k$ by moving orthogonally to line $\Delta^{ij}_k=0$,  the sign of $s^{ijk}_i$ can be flipped. }
    \label{fig:slice2}
\end{figure}

Consider the relation $\sigma=\{e_i,h_k\}$ and $\tau=\{e_j,e_k,h_i,h_j\}$:

\begin{equation}\label{GP2}
-s^{ijk}_i-s^{ik}_i s^{jk}_k-s^{jk}_j\Delta^{ik}_j=0
\end{equation}
This relation implies that if $s^{ik}_i=0$ and $s^{jk}_j=0$ then $s^{ijk}_i=0$ as well.  Thus, $\Delta^{ij}_k=0$, $s^{ik}_i=0$, $s^{jk}_j=0$ and $s^{ijk}_i=0$ define a central arrangement of lines centered at $(-b_i/b_k,-b_j/b_k)$ (Figure~\ref{fig:slice2}). We would like to show that the line $s^{ijk}_i=0$ intersects the positive and negative quadrants for some slice.  Let $w$ be a point in the parameter space such that $s^{ijk}_i(w)=0$. Equation \eqref{GP2} implies

\[
\text{sgn }\:s^{ik}_i(w) s^{jk}_j(w) =-\text{sgn }\:s^{jk}_k(w) \Delta^{ik}_j(w)
\]
so that the line $s^{ijk}_i=0$ intersects the positive and negative quadrants if and only if 
\begin{equation}\label{caveat}
\text{sgn }\: s^{jk}_k(w) \Delta^{ik}_j(w)=-
\end{equation}
If $j$ happens to separate $i$ and $k$, then this is holds. On the other hand, if $j$ is non-separating, we are then free to vary the parameter $\Delta^{ik}_j$ until the condition again holds. Note that this can be viewed as varying the two-dimensional slice which we are considering. Thus there exists a slice for which the line $s^{ijk}_i=0$ intersects the quadrant defined by $G$. To conclude, we observe that the mutation is then realized by varying the parameters orthogonally to the line $\Delta^{ij}_k=0$ which geometrically can be understood as varying the separation of the hyperplanes along $H_i$ and $H_j$ along the $x_k$ axis.
\end{proof}

\par The arguments in Theorem 5.1 can be viewed as capturing simple geometric properties of the hyperplane arrangement of the network. To illustrate this we explicitly realize a bifurcation from Figure~\ref{fig:bifs}.

\begin{example}

Consider the threshold-linear network given by the parameters:
\[
W=\small\begin{bmatrix}
0&   -0.63 &  -0.84\\
   -0.65  & 0 &  -0.67\\
   -0.45  & -0.50 & 0
\end{bmatrix}\hspace{.25cm} b=\begin{bmatrix}
0.43\\
    0.48\\
    0.41
\end{bmatrix}
\]
The directed graph $G$ associated to this network is of type $D_2$ from Figure~\ref{fig:bifs}. By computing the chirotope from the network parameters $W$ and $b$ and applying Theorem \ref{theorem1} we find the fixed point supports for this network are $\operatorname{FP}(W,b)=\{123\}$. To illustrate Theorem 5.1, consider the persistent bifurcation $\{123\}\to \{12\}$. Since $\{1,2\}\not\in\operatorname{FP}(W,b)$, we must have:
\[
\text{sgn}\:s^{12}_1=\text{sgn}\:s^{12}_2 = \text{sgn}\:s^{12}_3
\]
Thus, $s^{123}_3<0$ and the bifurcation $\{123\}\to \{12\}$ is obtained by varying the network parameters so that  $s^{123}_3>0$. This bifurcation arises because $1\leftrightarrow 2$ and both $1$ and $2$ are non-separating nodes in $G$ and thus $s^{123}_3$ is a realizable mutation. From the proof of the theorem with $(i,j,k)=(3,1,2)$, this mutation can be realized by varying the separation $\Delta_2^{31}$ as long as the condition in \eqref{caveat} holds:
\[
\text{sgn }\:s^{12}_2\cdot \Delta^{32}_1=-
\]
 Since $1\to 2$ in $G$, this is equivalent to $\Delta^{32}_1=b_2W_{31}-b_3W_{21}<0$. However, in our case we have
\[
\Delta^{32}_1=b_2W_{31}-b_3W_{21}=0.0505>0
\]
Thus, in order to unlock the mutation corresponding to the determinant $s^{123}_3$ we must first vary the separation $\Delta^{32}_1$ across zero. This can be done by increasing $W_{31}$ and decreasing $W_{21}$ and corresponds to pushing the hyperplane $H_2$ across $H_3$ along the $x_1$-axis (Figure~\ref{fig:clique_geometry_1}). 
\begin{figure}[h]
    \centering
    \includegraphics[width=6in]{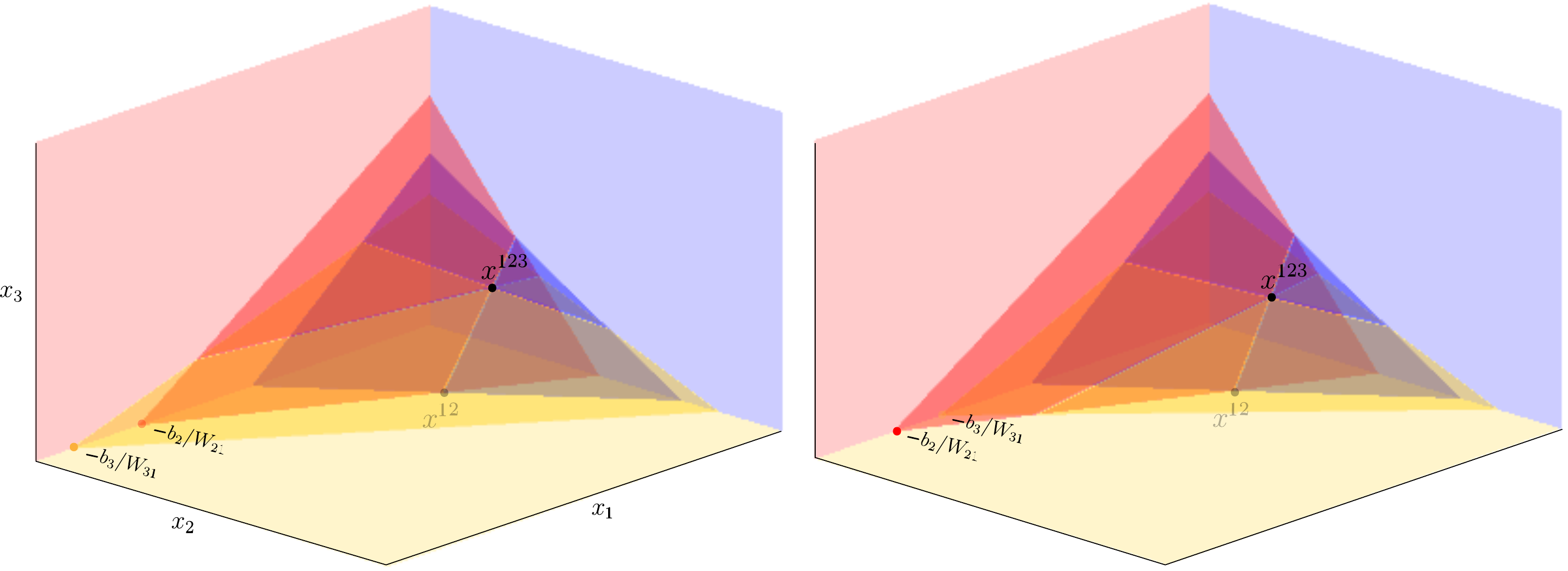}
    \caption{The sign of $\Delta^{23}_1$ is flipped by pushing the hyperplane $H_2$ over $H_3$ along the $x_1$-axis. This change must occur before the bifurcation $\{123\}\to\{12\}$ is possible.}
    \label{fig:clique_geometry_1}
\end{figure}
Once this change has been made, the theorem states that the bifurcation $\{123\}\to\{12\}$ is obtainable by varying the separation $\Delta^{13}_2$ via the parameters $W_{12}$ and $W_{32}$. Indeed, by increasing $W_{12}$ and decreasing $W_{32}$ we see first a simplicial cell appear in the arrangement corresponding to the mutation $s^{123}_3$ and then the collapse of this cell leading to the desired bifurcation $\{123\}\to\{12\}$.
\begin{figure}[h]
    \centering
    \includegraphics[width=6in]{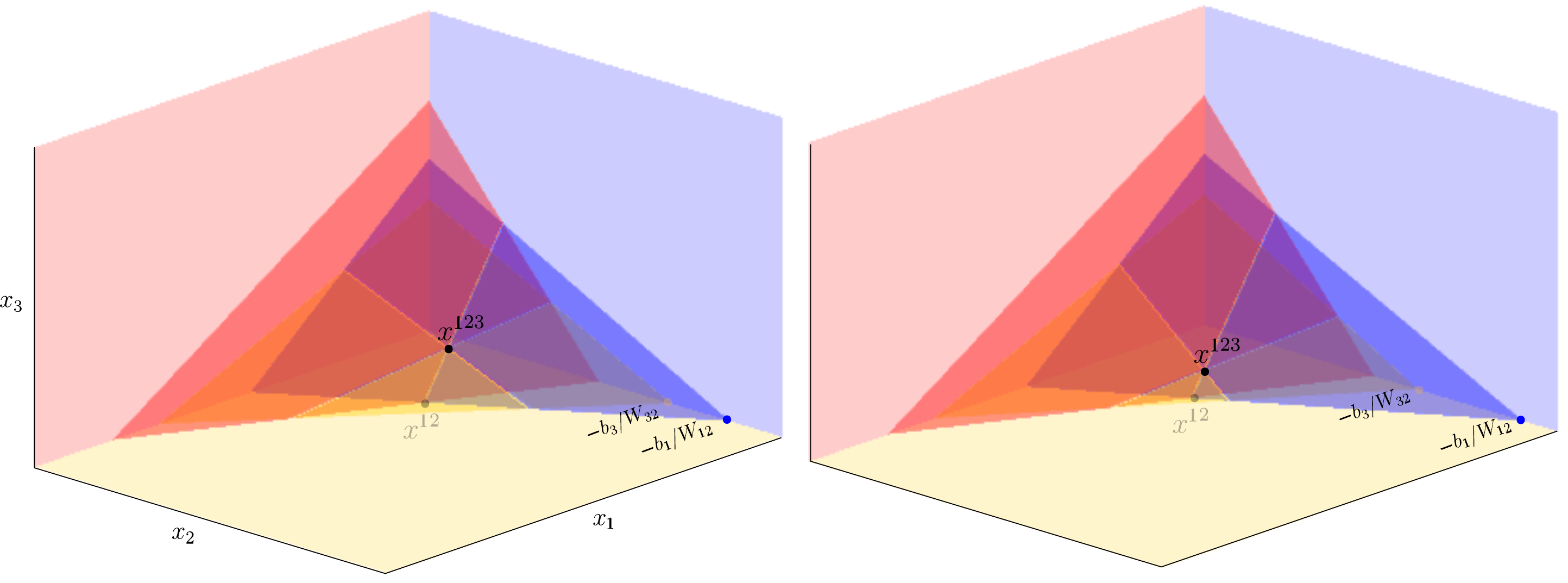}
    \caption{By increasing the separation of the hyperplanes $H_1$ and $H_3$ along the $x_2$-axis a simplicial cell is introduced into the arrangement corresponding to the bifurcation $\{123\}\to\{12\}$. By continuing to increase this separation the cell is collapsed and the bifurcation is realized.}
    \label{fig:my_label}
\end{figure}

\end{example}

\begin{example}
Consider the four dimensional threshold-linear network given by the parameters:
\[
W=\small\begin{bmatrix}
0 &  -0.89  & -0.83  & -0.56\\
   -0.89 &  0 &  -1.44 &  -1.38\\
   -1.59  & -0.74  & 0 &  -1.94\\
   -0.26 & -0.62 &  -0.04 &  0
  \end{bmatrix}
  \hspace{.5cm}
  b=\begin{bmatrix}
  0.46\\
    0.73\\
    0.85\\
    0.48
  \end{bmatrix}
   \]
The fixed point supports are $\operatorname{FP}(W,b)=\{14, 124, 1234\}$ and are found by computing the chirotope from the network parameters and then applying Theorem \ref{theorem1}.  By computing the directed graph we can gain an understanding of some of the support bifurcations that can be obtained by perturbing the network parameters away from $(W,b)$. For instance, we see that the graph contains the following subgraph on three nodes:
\begin{figure}[h]
    \centering
    \includegraphics[width=3in]{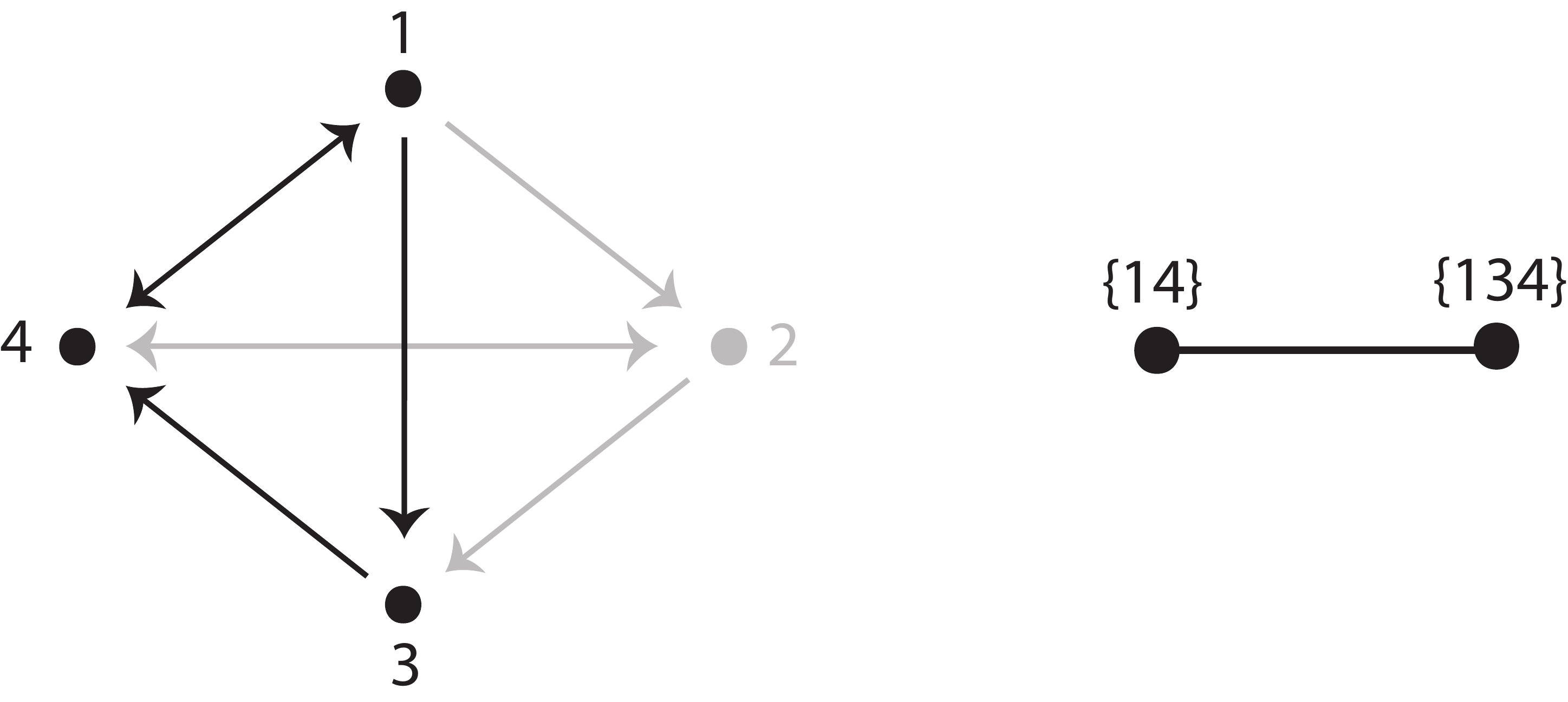}
    \caption{A subgraph of size three and its bifurcation graph.}
    \label{fig:n4_subgraph}
\end{figure}
\newline
This subgraph is isomorphic to graph $B4$ in Figure~\ref{fig:bifs} and after relabeling we see that its bifurcation graph consists of the single persistent bifurcation $\{14\}\to \{134\}$. From the results of this section, this bifurcation can be realized by varying the separation $\Delta^{34}_1$ corresponding to the non-separating neuron one. Consider continuously increasing the weight $W_{31}$. In order to preserve the directed graph we must maintain the inequality $W_{31}>-b_3/b_1=-1.85$ and so this is guaranteed if we are increasing $W_{31}$. What we observe in this case is not the bifurcation $\{14,124,1234\}\to\{134,124,1234\}$ we are interested in but the bifurcation $\{14,124,1234\}\to\{14\}$. Numerically, this bifurcation occurs near $W_{31}\sim-1.26$. Continuing, we observe the bifurcation $\{14\}\to \{134\}$ near $W_{31}\sim-.14$ To understand this, look at the bifurcation graph of the full four dimensional network.
\begin{figure}[h]
    \centering
    \includegraphics[width=5in]{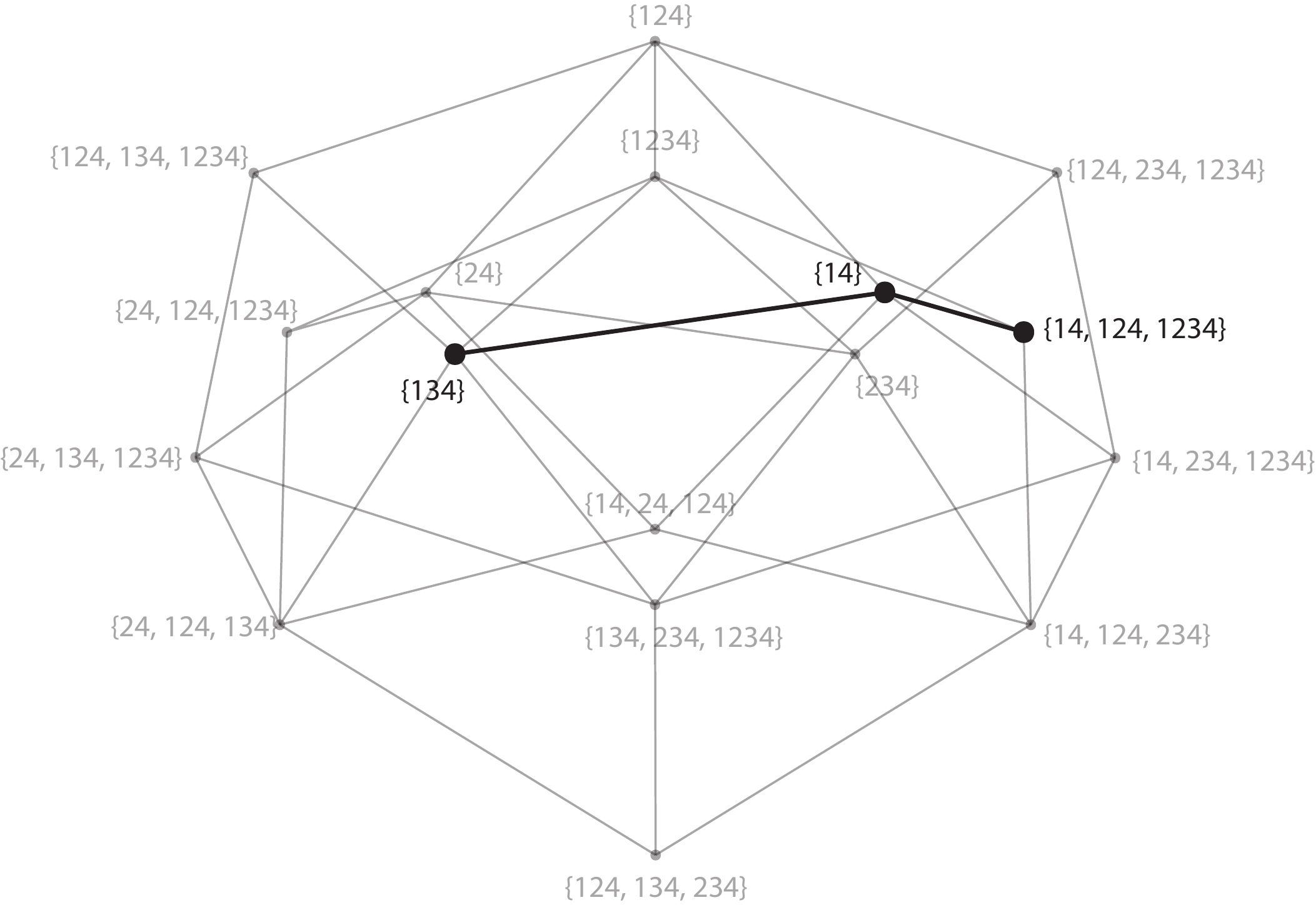}
    \caption{Bifurcation graph of the directed graph of size four in Figure~\ref{fig:n4_subgraph}. The highlighted edges correspond to the bifurcation $\{14\}\to\{134\}$ arising from the subgraph induced by the neurons $\{1,3,4\}$.}
    \label{fig:n4_bif}
\end{figure}
The node $\{14,124,1234\}$ in the bifurcation graph represents an equivalence class of networks containing the particular network given by the parameters $W$ and $b$ above. Note that the single bifurcation $\{14\}\to\{134\}$ of the subgraph does not correspond to an outgoing edge of this node. Thus, we see that as we try to realize the bifurcation $\{14\}\to\{134\}$ we must first go through the bifurcation $\{124,1234\}\to \emptyset$. This bifurcation cannot be viewed as a bifurcation of the subgraph and an understanding of why this bifurcation appears in the graph would require the analogue of Theorem \ref{theorem2} in dimension four.
\par The relationship between the bifurcation graph of a network and that of the smaller motifs from which it is constructed is non-trivial. In general, all that can be said is that if a bifurcation involving the neurons of a subset $\sigma$ exists, then it must exist in the subgraph induced by the subset $\sigma$. Moreover, we see from this example that the bifurcation structure of even small networks of size four can already be quite complex and that this type of bifurcation analysis in higher dimensions likely will require additional constraints besides what it is imposed by the directed graph.
\end{example}

\bibliographystyle{abbrv}
\bibliography{ref}
\end{document}